\theoremstyle{definition}
\newtheorem{thm}{Theorem}[section]
\newtheorem{cor}[thm]{Corollary}
\newtheorem{lemma}[thm]{Lemma}
\newtheorem{prop}[thm]{Proposition}
\newtheorem{defn}[thm]{Definition}
\newtheorem{example}[thm]{Example}
\newtheorem{rem}[thm]{Remark}
\newtheorem{question}[thm]{Question}
\newcommand{\abs}[1]{\left\vert#1\right\vert}
\newcommand{\ideal}[1]{\left\langle#1\right\rangle}
\newcommand{\set}[1]{\left\{#1\right\}}
\newcommand{\R}{\mathbb R}
\newcommand{\Z}{\mathbb Z}
\newcommand{\Q}{\mathbb Q}
\newcommand{\C}{\mathbb C}
\newcommand{\A}{\mathcal A}
\newcommand{\B}{\mathcal B}
\newcommand{\Hh}{\mathcal H}
\DeclareMathOperator{\gr}{gr}
\begin{document}

\title[Eq. Cohomology \& V.G. Filtration]{Equivariant Cohomology and the Varchenko-Gelfand Filtration}
\author[D. Moseley]{Daniel Moseley}
\begin{abstract}
The cohomology of the configuration space of $n$ points in $\R ^3$ is isomorphic to the regular representation of the symmetric group, which acts by permuting the points.  We give a new proof of this fact by showing that the cohomology ring is canonically isomorphic to the associated graded of the Varchenko-Gelfand filtration on the cohomology of the configuration space of $n$ points in $\R ^1$.  Along the way, we give a presentation of the equivariant cohomology ring of this space with respect to a circle acting on $R^3$ via rotation around a fixed line.  We extend our results to the settings of arbitrary real hyperplane arrangements (the aforementioned theorems correspond to the braid arrangement) as well as oriented matroids.
\end{abstract}
\maketitle

\section{Introduction}

\begin{defn}
  Let $V$ be a finite dimensional real vector space, and let $\mathcal{A} = \set{H_1, \ldots, H_n}$ be a hyperplane arrangement in $V$ given by $H_i = \omega_i^{-1}(0)$ for some non-constant affine linear form $\omega_i :V \to \R$. Let $\overline{\omega_i}$ be the associated linear map. Define affine linear maps $\omega_{i,k}: V^k \to \R^k$ by 
  \[ \omega_{i,k}(v_1, \ldots, v_k) =(\omega_i(v_1),\overline{\omega_i} (v_2), \ldots, \overline{\omega_i}(v_k)). \]
  The space $M_k(\mathcal{A})$ is defined to be the complement of the union of the affine subspaces 
  \[ \omega_{i,k}^{-1}(0, 0,\ldots, 0). \]
\end{defn}

When $k=1$, this is just the complement of the arrangement. When $k=2$, this is isomorphic to the complement of the complexified arrangement. When $\mathcal{A}$ is the braid arrangement $\mathcal{B}_n$, $M_k(\mathcal{A})$ is the configuration space of $n$ ordered points in $\R^k$.

Consider the ring\footnote{All cohomology rings in this paper will be taken with coefficients in $\Q$.} $H^0(M_1(\mathcal{A}))$ of locally constant functions on $M_1(\mathcal{A})$. Varchenko and Gelfand defined a filtration of this ring via Heaviside functions. Let 
\[ H_i ^+ = \set{v\in V \mid \omega_i (v) >0},\]
and let 
\[ H_i ^- = \set{v\in V \mid \omega_i (v) <0}. \] 
Define the Heaviside function $x_i \in H^0(M_1(\mathcal{A}))$ by putting 
\[ x_i(v) = \left\{ \begin{matrix} 1 & v\in H_i ^+ \\ 0 & v\in H_i ^- . \end{matrix} \right. \]

\begin{prop}\label{VGpres}\cite[Thm 4.5]{VG}
  Consider the map $\psi: \Q[e_1, \ldots, e_n] \to H^0(M_1(\mathcal{A}))$ taking $e_i$ to $x_i$, and let $\mathcal{I}_1$ be the kernel. This map is surjective, and  $\mathcal{I}_1$ is generated by the following relations:
  \[\begin{array}{l l}
    (1) & e_i ^2 - e_i \\
    \\
    (2) & \displaystyle\prod_{i\in S^+} e_i \prod_{j\in S^-} (e_j-1)  \text{ if} \displaystyle \bigcap_{i\in S^+} H_i ^+ \cap \bigcap_{j\in S^-} H_j ^- = \varnothing. \hspace{.8in}\\
    \\
    (3) & \displaystyle\prod_{i\in S^+} e_i \prod_{j\in S^-} (e_j-1) - \prod_{i\in S^+} (e_i-1) \prod_{j\in S^-} e_j  \\
    \\
      &\text{if } \displaystyle \bigcap_{i\in S^+} H_i ^+ \cap \bigcap_{j\in S^-} H_j ^- = \varnothing \text{ and } \bigcap _{i\in (S^+ \cup S^-)} H_i \neq \varnothing .\\
  \end{array}\]
\end{prop}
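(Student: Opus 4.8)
The plan is to show, simultaneously, that $\psi$ is surjective and that its kernel is exactly the ideal $\mathcal{I}$ generated by the families $(1)$, $(2)$, $(3)$. Surjectivity is immediate: $H^0(M_1(\A))$ has as a $\Q$-basis the indicator functions $\mathbf 1_C$ of the chambers $C$ of $\A$, and if $\sigma\in\set{+,-}^n$ is the sign vector of $C$ (so $C\subseteq H_i^+$ when $\sigma_i=+$ and $C\subseteq H_i^-$ when $\sigma_i=-$), then
\[
\mathbf 1_C=\psi\!\left(\prod_{\sigma_i=+}e_i\prod_{\sigma_j=-}(1-e_j)\right),
\]
since the displayed product of Heaviside functions takes the value $1$ on $C$ and $0$ on every other chamber. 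Hence $\psi$ is onto, and $\dim_{\Q}\Q[e_1,\dots,e_n]/\mathcal{I}_1$ equals the number of chambers of $\A$.

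Next one checks the inclusion $\mathcal{I}\subseteq\mathcal{I}_1$ by verifying that each generator lies in $\ker\psi$. For $(1)$, $x_i^2=x_i$ because $x_i$ is $\set{0,1}$-valued on $M_1(\A)$. For $(2)$, $\psi$ carries $\prod_{i\in S^+}e_i\prod_{j\in S^-}(e_j-1)$ to $\pm$ the indicator of the set of chambers contained in $\bigcap_{i\in S^+}H_i^+\cap\bigcap_{j\in S^-}H_j^-$, which is empty by hypothesis. For $(3)$, the first monomial maps to $0$ exactly as in $(2)$, and the second maps to $\pm$ the indicator of the chambers in the antipodal region $\bigcap_{i\in S^+}H_i^-\cap\bigcap_{j\in S^-}H_j^+$; this region is empty as well, for if $p\in\bigcap_{i\in S^+\cup S^-}H_i$ and $q$ lay in it, then along the line through $p$ and $q$ one has $\omega_i(p+t(q-p))=t\,\omega_i(q)$, so for $t<0$ every sign is reversed and $p+t(q-p)$ would lie in $\bigcap_{i\in S^+}H_i^+\cap\bigcap_{j\in S^-}H_j^-$, contradicting $(2)$. (In particular $(3)$ is already a consequence of two instances of $(2)$; it is listed separately because it is its lowest-degree part, not a leading term, that contributes a relation to the associated graded ring.)

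The real content is the reverse inclusion $\mathcal{I}_1\subseteq\mathcal{I}$, which by the first paragraph is equivalent to $\dim_{\Q}\Q[e_1,\dots,e_n]/\mathcal{I}\le(\text{number of chambers of }\A)$. The plan is to exhibit a spanning set of $\Q[e_1,\dots,e_n]/\mathcal{I}$ of that cardinality. Using $(1)$, every class is a $\Q$-combination of squarefree monomials $m_S=\prod_{i\in S}e_i$. Fix a linear order on $\set{1,\dots,n}$ and order the subsets $S$ by cardinality, and within a given cardinality lexicographically on the sorted elements. The heart of the matter is a straightening lemma: if $S$ contains a broken circuit $C\setminus\set{\min C}$ of the matroid of $\A$, then $m_S$ is a $\Q$-combination of strictly smaller monomials $m_{S'}$ in this order. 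When $C\subseteq S$, this is immediate from $(2)$: if $\bigcap_{i\in C}H_i\neq\varnothing$, a dependence $\sum_{i\in C}\lambda_i\overline{\omega_i}=0$ among the circuit's normals is also a dependence $\sum_{i\in C}\lambda_i\omega_i=0$ among the forms, so the sign pattern on $C$ matching the signs of the $\lambda_i$ has empty half-space intersection (on that region $\sum_{i\in C}\lambda_i\omega_i$ would be strictly positive), and expanding the vanishing product from $(2)$ writes $m_C$, hence $m_S$, as a combination of $m_{S'}$ with $\abs{S'}<\abs{S}$; and if $\bigcap_{i\in C}H_i=\varnothing$ then $C$ itself already supports a sign pattern with empty half-space intersection, so $(2)$ applied to $C$ does the same thing. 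When only $C\setminus\set{\min C}$, not $C$ itself, sits inside $S$, one instead uses the lowest-degree part of $(3)$ for $C$ — legitimate since $\bigcap_{i\in C}H_i\neq\varnothing$ for such a circuit — in which $m_{C\setminus\set{\min C}}$ occurs with coefficient $\pm1$; solving for it trades $m_S$ for monomials of the same cardinality containing $\min C$ (hence lexicographically earlier) plus monomials of smaller cardinality. Iterating, the monomials $m_S$ with $S$ containing no broken circuit span $\Q[e_1,\dots,e_n]/\mathcal{I}$; by Whitney's broken-circuit theorem there are $(-1)^{\dim V}\chi_{\A}(-1)$ of them, which by Zaslavsky's theorem is the number of chambers of $\A$. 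Comparing with the first paragraph forces equality of dimensions, so $\mathcal{I}=\mathcal{I}_1$.

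I expect the straightening lemma to be the one genuine obstacle — in particular keeping track of how relations $(1)$, $(2)$, $(3)$ interact, and handling the non-central case, where sets of hyperplanes with empty intersection must also be admitted as ``circuits.'' An alternative route for this step is induction on $n$ by deletion--restriction: for $H_n\in\A$ set $\A'=\A\setminus\set{H_n}$, $\A''=\A^{H_n}$, write $R(\mathcal{B})=\Q[e]/\mathcal{I}(\mathcal{B})$ for an arbitrary arrangement $\mathcal{B}$, and fit together a short exact sequence $0\to R(\A')\to R(\A)\to R(\A'')\to 0$, intertwined through the maps $\psi$ with $0\to H^0(M_1(\A'))\to H^0(M_1(\A))\to H^0(M_1(\A''))\to 0$ whose last arrow sends a function on the chambers of $\A$ to the difference of its two values across $H_n$; one then concludes from the additivity of the chamber count under deletion--restriction. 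There the obstacle shifts to building the surjection $R(\A)\to R(\A'')$ — whose target has generators indexed by the hyperplanes of $\A''$, with several $H_i$ possibly restricting to one — and to proving injectivity of $R(\A')\to R(\A)$.
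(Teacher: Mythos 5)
Your argument is correct in outline, but be aware that the paper does not actually prove this proposition: it is quoted from \cite[Thm 4.5]{VG}, and the only derivation the paper itself offers is the indirect one recorded in Remark \ref{tpresrem} --- Theorem \ref{tpres} computes $H_T^*(M_3(\A))$ with generators $\omega_{i,3}^*([Z^+]_T)$ and relations which, upon setting $u=1$ via Proposition \ref{specialize}, become exactly families $(1)$--$(3)$ above; since the proof of Theorem \ref{tpres} nowhere invokes Proposition \ref{VGpres}, that is a genuine independent, topological rederivation. Your route is the direct combinatorial one (essentially that of \cite{VG} and \cite{GR}): surjectivity via the chamber indicators $\prod_{\sigma_i=+}x_i\prod_{\sigma_j=-}(1-x_j)$; the containment $\mathcal I\subseteq\mathcal I_1$ by pointwise evaluation, where your reflection-through-$p$ argument for family $(3)$ is literally the one the paper uses for family $(3)$ in the proof of Theorem \ref{tpres}; and the reverse containment by NBC straightening together with the count $(-1)^{\dim V}\chi_{\A}(-1)=$ number of chambers. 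Your approach buys a self-contained elementary proof; the paper's buys the interpretation of the Heaviside filtration as the equivariant filtration, which is its main point. Your straightening sketch is sound, including the key observation that a minimal subfamily with empty intersection carries a sign pattern with empty half-space intersection (via an affine dependence $\sum\lambda_i\omega_i=\mathrm{const}\neq 0$ with all $\lambda_i\neq 0$ by minimality), so family $(2)$ handles both species of ``circuit'' in the affine case; the remaining bookkeeping --- that the surviving monomials are the $m_S$ with $\bigcap_{i\in S}H_i\neq\varnothing$ containing no broken circuit, and that these are counted by the chambers --- is exactly the coning argument of \cite[3.55]{OT} plus Zaslavsky, which the paper itself leans on in Lemma \ref{recursion}.

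Two small corrections. First, the homogeneous component of family $(3)$ in which $m_{C\setminus\set{\min C}}$ appears with coefficient $\pm1$ has degree $\abs{C}-1$; after the degree-$\abs{C}$ terms cancel this is the \emph{highest} nonvanishing part (the leading form), not the ``lowest-degree part'' --- compare the identity $\widetilde{\partial}(X)+\text{lower degree terms}$ in the proof of Theorem \ref{matassocgr}. Your rewriting step is unaffected, since the genuinely lower-degree components of $(3)$ only contribute monomials of smaller cardinality. Second, your broken circuits delete $\min C$ while Lemma \ref{recursion} deletes the largest element of $C$; either convention is fine, but the lex order driving termination must match the convention chosen.
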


\begin{rem}
  Note that only families $(1)$ and $(2)$ are necessary to generate $\mathcal{I}_1$. However, in the case that $\mathcal{A}$ is central, only families $(1)$ and $(3)$ are necessary to generate $\mathcal{I}_1$ \cite[Thm 4.5]{VG}. We include family $(3)$ as this is the presentation we will get when we set the equivariant parameter $u=1$ in our presentation of the equivariant cohomology ring.
\end{rem}

Let $P^k\subseteq H^0(M_1(\mathcal{A}))$ be the space of functions representable by polynomials in $\set{x_i}_{i=1}^n$ of degree less than or equal to $k$. Varchenko and Gelfand show\footnote{They show this only for central arrangements, but the proof extends easily to the affine case.} that the associated graded algebra of $H^0(M_1(\mathcal{A}))$ is isomorphic as a graded vector space to $H^*(M_2(\mathcal{A}))$ \cite[Cor 2.2]{VG}.

However, as $H^*(M_2(\mathcal{A}))$ is non-commutative, it cannot be isomorphic as a ring. Proudfoot shows that one obtains an isomorphism of rings if one works with coefficients in $\mathbb{F}_2$ rather than $\Q$, and he gives an equivariant cohomology interpretation of this fact in the discussion after Theorem 3.1 \cite{Pr}. This is not so satisfying, however, because the signs in the presentation of $H^*(M_2(\mathcal{A}))$ are subtle, and all of this structure is lost by working over $\mathbb{F}_2$. In this paper, we'll show that the right space to look at is $M_3(\mathcal{A})$ rather than $M_2(\mathcal{A})$. More precisely, we prove the following theorem.

\begin{thm}\label{mainresult} Let $\mathcal{A}$ be a real hyperplane arrangement.

  \begin{itemize}
    \item[(a)] The associated graded algebra of $H^0(M_1(\mathcal{A}))$ with respect to the VG filtration is isomorphic as a graded ring to $H^*(M_3(\mathcal{A}))$, with degrees halved. (That is, $P^k/P^{k-1} \cong H^{2k} (M_3(\mathcal{A}))$.)
    \item[(b)] If $W$ is a finite group acting on $\mathcal{A}$, then $H^0(M_1(\mathcal{A}))$ and $H^*(M_3(\mathcal{A}))$ are isomorphic as $W$-representations.
  \end{itemize}
\end{thm}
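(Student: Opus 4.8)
The plan is to pass to $S^1$-equivariant cohomology, where $S^1$ acts on $\R^3$ by rotation about a fixed axis — equivalently, on $V^3$ it fixes the first factor and rotates the plane spanned by the second and third. Its fixed locus is $M_3(\mathcal{A})^{S^1}=M_1(\mathcal{A})$: a point $(v_1,v_2,v_3)$ is fixed iff $v_2=v_3=0$, and such a point lies in $M_3(\mathcal{A})$ iff $\omega_i(v_1)\neq 0$ for all $i$. Since $M_1(\mathcal{A})$ is a disjoint union of contractible chambers, $H^*_{S^1}(M_1(\mathcal{A}))=H^0(M_1(\mathcal{A}))\otimes\Q[u]$ with $\deg u=2$. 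I would first record that $M_3(\mathcal{A})$ is equivariantly formal: by a Goresky--MacPherson computation (or the deletion--restriction recursion below) $H^*(M_3(\mathcal{A}))$ is concentrated in even degrees with total dimension the number of chambers of $\mathcal{A}$, which is $\dim_\Q H^0(M_1(\mathcal{A}))=\dim_\Q H^*(M_1(\mathcal{A}))$; comparing with the localization theorem gives freeness of $H^*_{S^1}(M_3(\mathcal{A}))$ over $\Q[u]$, an injection $H^*_{S^1}(M_3(\mathcal{A}))\hookrightarrow H^*_{S^1}(M_1(\mathcal{A}))$, and (as the cokernel of this restriction is $u$-torsion) a canonical isomorphism $H^*_{S^1}(M_3(\mathcal{A}))/(u-1)\xrightarrow{\ \sim\ }H^0(M_1(\mathcal{A}))$. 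For each $i$, the complement of the single subspace $\omega_{i,3}^{-1}(0)$ is $S^1$-equivariantly homotopy equivalent to $\p^1$ with a rotation action; pulling back the standard generator of $H^2_{S^1}(\p^1)$ gives a class $y_i\in H^2_{S^1}(M_3(\mathcal{A}))$, normalized so that its restriction to the fixed locus is the function equal to $u$ on $H_i^+$ and $0$ on $H_i^-$; in particular $y_i$ maps to the Heaviside function $x_i$ under the isomorphism above.

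The core of the proof is the presentation of $H^*_{S^1}(M_3(\mathcal{A}))$ as $\Q[u,e_1,\dots,e_n]$ modulo the ideal $\mathcal{I}_3$ generated by the homogenizations (with $u$) of the relations of Proposition~\ref{VGpres}: $e_i^2-ue_i$; the product $\prod_{i\in S^+}e_i\prod_{j\in S^-}(e_j-u)$ when the relevant intersection of half-spaces is empty; and the corresponding difference relation. That these hold in $H^*_{S^1}(M_3(\mathcal{A}))$ is immediate from equivariant formality, since one may check after restricting to $M_1(\mathcal{A})$, where they are visible from the definition of the $y_i$ ($y_i^2-uy_i$ restricts to $0$ pointwise, and the product in family (2) restricts to a function supported on an empty intersection). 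The remaining work — that $u,y_1,\dots,y_n$ generate and that $\mathcal{I}_3$ is the full ideal of relations — is where I expect the main obstacle to lie, and I would handle it by a Hilbert-series comparison. On the combinatorial side, $\dim_\Q(\Q[u,e_\bullet]/\mathcal{I}_3)$ in degree $2k$ is $\sum_{j\le k}\dim\gr^j_{VG}H^0(M_1(\mathcal{A}))$ by \cite[Cor 2.2]{VG} together with an explicit $\Q[u]$-spanning set indexed by the chambers of $\mathcal{A}$ (the equivariant analogue of a no-broken-circuit basis). On the topological side, $\dim_\Q H^{2k}_{S^1}(M_3(\mathcal{A}))$ is computed by deletion--restriction: the closed $S^1$-invariant submanifold of $M_3(\mathcal{A}\setminus H)$ lying on the subspace attached to $H$ is a copy of $M_3(\mathcal{A}^H)$ of real codimension $3$, and the equivariant Gysin sequence, with the inductive hypothesis of even concentration, yields short exact sequences and hence $\pi\big(M_3(\mathcal{A}),t\big)=\pi\big(M_3(\mathcal{A}\setminus H),t\big)+t^2\pi\big(M_3(\mathcal{A}^H),t\big)$, i.e.\ $\pi(M_3(\mathcal{A}),t)=\pi(\mathcal{A},t^2)$; the presented algebra satisfies the same recursion. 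Matching the counts forces the presentation and re-proves freeness. For the oriented-matroid statements there is no space $M_3$; one instead \emph{defines} the equivariant ring by this presentation and runs the identical bookkeeping against the combinatorially defined Varchenko--Gelfand ring.

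Granting the presentation, part (a) is formal. Setting $u=0$ gives a presentation of $H^*(M_3(\mathcal{A}))=H^*_{S^1}(M_3(\mathcal{A}))/u$ (the last equality being equivariant formality), and setting $u=1$ recovers exactly the presentation of $H^0(M_1(\mathcal{A}))$ in Proposition~\ref{VGpres}. Since $H^*_{S^1}(M_3(\mathcal{A}))$ is a free $\Q[u]$-algebra with $u$ in degree $2$ and nonnegative grading, it is canonically the Rees algebra of its $u=1$ specialization filtered by equivariant degree, so the associated graded of that specialization is its $u=0$ specialization, as graded rings. Under $H^*_{S^1}(M_3(\mathcal{A}))/(u-1)\cong H^0(M_1(\mathcal{A}))$ the degree-$2k$ part of this filtration is spanned by the images of degree-$2k$ monomials in $u,y_1,\dots,y_n$, which is exactly $P^k$ because $y_i\mapsto x_i$; hence it is the Varchenko--Gelfand filtration, and $\gr\big(H^0(M_1(\mathcal{A}))\big)\cong H^*(M_3(\mathcal{A}))$ as graded rings with degrees halved.

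For part (b), a finite group $W$ acting on $\mathcal{A}$ acts diagonally on $V^3$ and therefore commutes with the $S^1$-rotation, so every object above carries a compatible $W$-action and $W$ fixes $u$. Thus $H^*_{S^1}(M_3(\mathcal{A}))$ is a graded free $\Q[u]$-module with a compatible $W$-action; choosing a homogeneous $\Q[u]$-basis, the matrix of any $w\in W$ is block-triangular with respect to the grading, so its diagonal blocks are scalar and the trace of $w$ on any fibre $H^*_{S^1}(M_3(\mathcal{A}))/(u-a)$ is independent of $a$. Over $\Q$ a representation is determined by its character, so the fibres at $u=0$ and $u=1$ are isomorphic $W$-representations, i.e.\ $H^*(M_3(\mathcal{A}))\cong H^0(M_1(\mathcal{A}))$ as $W$-representations. (Only equivariant formality of $M_3(\mathcal{A})$, not the full presentation, is needed for this last step.)
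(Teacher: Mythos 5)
Your overall strategy is the paper's: the rotation action of $S^1$ on $M_3(\A)$ with fixed locus $M_1(\A)$, equivariant formality from even-degree concentration via deletion--restriction, a presentation of $H^*_{S^1}(M_3(\A))$ by the $u$-homogenized Varchenko--Gelfand relations, specialization at $u=0$ and $u=1$ for part (a), and a character/flat-family argument for part (b). The one place you genuinely diverge is the generation statement. The paper proves that the classes $\omega_{i,3}^*([Z^+])$ generate $H^*(M_3(\A))$ by a separate coning induction (Lemma \ref{generators}), reducing to the central case of de Longueville--Schultz; you propose to extract generation from a ``Hilbert-series comparison.'' As literally stated that does not work --- equality of Hilbert functions between a presented algebra and a target never forces a given map to be surjective --- but your own ingredients repair it: since restriction $H^*_{S^1}(M_3(\A))\hookrightarrow H^0(M_1(\A))\otimes\Q[u]$ is injective by formality plus localization, and the subalgebra generated by $u,y_1,\dots,y_n$ restricts in degree $2k$ onto $u^kP^k$, whose dimension $\sum_{j\le k}\dim\gr^j$ equals $\dim H^{2k}_{S^1}(M_3(\A))$ by \cite[Cor 2.2]{VG} together with your deletion--restriction count, the subalgebra is everything. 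Spelled out this way, your route to surjectivity is arguably cleaner than the paper's coning argument, at the cost of leaning on the VG dimension formula. Two further points you should not leave as assertions: (i) the chamber-indexed $\Q[u]$-spanning set of $\Q[u,e_\bullet]/\mathcal{I}_3$ (needed for injectivity of the presentation map) is real combinatorial work --- the paper's Lemma \ref{recursion} does it via Cordovil's no-broken-circuit basis and the Orlik--Terao broken circuit module, with another coning step to handle affine arrangements; and (ii) your check of the relations by restricting to the fixed locus is a legitimate substitute for the paper's geometric non-intersection argument, but it is only valid because of the injectivity in (i)'s sense, i.e.\ freeness over $\Q[u]$, which must be established first.
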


\begin{rem}
  A special case of Theorem \ref{mainresult}(b) is the action of $S_n$ on the braid arrangement $\mathcal{B}_n$. In this case we recover the fact that the cohomology of the configuration space of n ordered points in $\R^3$ is isomorphic to the regular representation of $S_n$ \cite{At}.
\end{rem}

\begin{rem}
One might wonder why we are only looking at $k=1,2,$ or $3$. In \cite[5.6]{dLS} it was shown (at least for central arrangements) that $H^*(M_k (\mathcal{A})) \cong H^*(M_{k-2}(\mathcal{A}))$ for all $k>3$.
\end{rem}

Last, we extend Theorem \ref{mainresult}(a) to the setting of oriented matroids. Here, $H^*(M_3(\mathcal{A}))$ is replaced by an algebra defined by Cordovil in \cite{Co} and $H^0(M_1(\mathcal{A}))$ is replaced by an analogous algebra for oriented matroids as defined in \cite{GR}.

\section{Equivariant Cohomology}

Let $G$ be a connected Lie group acting on a space $X$. Choose a contractible space $EG$ on which $G$ acts freely. Define 
\[ X_G := EG \times _G X \]
to be the quotient of $EG \times X$ by the relation 
\[ (g\cdot e, x) \sim (e, g \cdot x). \]

When $X$ is a point, the resulting space $X_G = EG/G$ is called the classifying space, and is usually denoted $BG$. For arbitrary $X$, $X_G$ is a fiber bundle over $BG$ with fiber $X$. This fiber bundle is trivial if and only if the action of $G$ on $X$ is trivial.

\begin{defn}
  The $G$-equivariant cohomology of $X$ is defined to be the ordinary cohomology of $X_G$, i.e.
  \[ H_G ^* (X) := H^*(X_G). \]
\end{defn}

\begin{rem} 
Note that in the definition of equivariant cohomology, we have made a choice of a space $EG$. However, if we choose a different contractible space with free $G$-action, we get an isomorphic cohomology ring.  
\end{rem}

Suppose that $X$ is an $m$-dimensional oriented $G$-manifold with $Y\subseteq X$ a closed, oriented, codimension $k$ $G$-submanifold. We denote by $[Y]_G \in H_G ^{k} (X)$ the class of the codimension $k$ submanifold $Y_G \subseteq X_G$. 

\begin{example}
  The group $T=S^1$ acts freely on the contractible space $ET=S^{\infty}$ with quotient $BT=\C P^{\infty}$. Hence, $H_T ^*(pt)$ is an polynomial ring generated in degree 2. 
  
  Consider the inclusion of a point into $\C$ at the origin. If we let $T$ act on $\C$ by multiplication, then the inclusion is equivariant, and the restriction map
  \[ H_T ^*(\C) \stackrel{\sim}{\longrightarrow} H_T ^*(pt) \]
is an isomorphism. The origin in $\C$ is a closed, oriented, codimension $2$ $T$-submanifold, and we denote the image of $[0]_T$ in $H_T^*(pt)$ by $u$. This class is nonzero, and thus $H_T^*(pt) \cong \Q [u]$.
\end{example}

The ordinary cohomology of a connected space $X$ has the structure of a $\Q$-algebra coming from the induced map of the map $X\to pt$. For equivariant cohomology, we get the structure of an $H_G ^*(pt)$-algebra from the map $X_G \to BG$. We can compute $H_G^*(X)$ as a module over $H_G^*(pt)$ using the Serre spectral sequence associated to this map. 

We say that $X$ is \textbf{equivariantly formal} if this spectral sequence collapses at the $E_2$ page $H^*(BG) \otimes H^*(X)$. Equivalently, $X$ is equivariantly formal if $H_G^*(X)$ is isomorphic to $H_G^*(pt) \otimes H^*(X)$ as an $H_G^*(pt)$-module and the map to $H^*(X)$ induced by the inclusion $X\hookrightarrow X_G$ is obtained by setting all positive degree elements of $H_G ^* (pt)$ to zero. If $X$ is equivariantly formal, then any lift of any $\Q$-basis of $H^*(X)$ to $H_G^*(X)$ is an $H_G^*(pt)$-basis.

\begin{example}\label{r3example}
  Let $T=S^1$ act on $\R^3\setminus \set{0}$ by rotation about the $x$-axis. Since $\R^3\setminus \set 0$ is homotopy equivalent to $S^2$, its cohomology ring is concentrated in even degree. Since the same is true for $BT$, the Serre spectral sequence degenerates at the $E_2$ page and $\R^3 \setminus \set 0$ is equivariantly formal. 

Denote by $Z^+$ and $Z^-$ the positive and negative $x$-axes, respectively. Choose an orientation of $\R^3\setminus \set 0$, and orient $Z^+$ and $Z^-$ outward. The classes $[Z^+]_T$ and $[Z^-]_T$ represent classes in $H_T ^2 (\R^3\setminus \set 0)$ as they are codimension $2$ $T$-submanifolds of the $3$-manifold $\R^3\setminus \set 0$.
  
   Consider the projection $\pi$ of $\R^3 \setminus \set 0$ onto the second and third coordinates. This maps is equivariant, and induces the map 
  \[ H_T^*(pt) \cong H_T^*(\R^2) \stackrel{\pi^*}{\longrightarrow} H_T^*(\R^3 \setminus \set 0) .\] 
  Note that $\pi^{-1}(0) = -Z^- \sqcup Z^+$, where $-Z^-$ denotes $Z^-$ with the opposite orientation. Thus the image of $u$ is $[Z^+]_T-[Z^-]_T$. Since these two subvarieties don't intersect, we get a map 
  \[ \Q[x,y]/\ideal{xy} \to H_T ^*(\R^3 \setminus \set{0}) \]
  sending $x$ to $[Z^-]_T$ and $y$ to $[Z^+]_T$. We claim that this is in fact an isomorphism.
  
  Note that the forgetful map $H_T ^*(\R^3 \setminus \set{0}) \to H^*(\R^3\setminus \set{0})\cong H^*(S^2)$ sends the classes of $[Z^-]_T$ and $[Z^+]_T$ to the same class, and this class generates $H^*(S^2)$.  Hence, the ring generated by $[Z^-]_T$ and $[Z^+]_T$ contains the image of $H_T ^*(pt)$ and surjects onto $H^*(\R^3 \setminus \set{0})$. Since $\R^3\setminus \set 0$ is equivariantly formal, this implies that the map $\Q[x,y]/\ideal{xy} \to H_T ^*(\R^3\setminus \set 0)$ is surjective.
  
  This map is also injective by a graded dimension count. Since $\R^3 \setminus \set 0$ is equivariantly formal, we have $H_T^*(\R^3 \setminus \set 0) \cong H^*(\R^3 \setminus \set 0) \otimes \Q [u]$ as a graded $\Q [u]$-module, and we're done.
\end{example}

\begin{prop}\label{specialize}
  Let $X$ be an equivariantly formal $T$-space, and let $F=X^T$. Then
\begin{align*}
    &H_T^*(X) /\ideal{u} \cong H^*(X)\text{ and} \\
    &H_T^*(X) /\ideal{u-1} \cong H^*(F).
\end{align*}
\end{prop}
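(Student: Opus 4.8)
The plan is to prove the two isomorphisms separately: the first is little more than an unwinding of the definition of equivariant formality, while the second is an application of the localization theorem, with equivariant formality used only to sharpen it.

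For the first isomorphism I would start from the ``forgetful'' ring homomorphism $\rho\colon H_T^*(X)\to H^*(X)$ induced by the inclusion of the fiber $X\hookrightarrow X_T$. Equivariant formality, as recalled above, says precisely that $H_T^*(X)$ is free over $H_T^*(pt)=\Q[u]$, that $H_T^*(X)\cong \Q[u]\otimes_{\Q} H^*(X)$ as a $\Q[u]$-module, and that $\rho$ is obtained by killing all positive-degree elements of $H_T^*(pt)$, i.e.\ by setting $u=0$. Hence $\rho$ is surjective with kernel exactly $u\cdot H_T^*(X)=\ideal{u}$, which gives $H_T^*(X)/\ideal{u}\cong H^*(X)$ as rings.

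For the second isomorphism, let $\iota\colon F\hookrightarrow X$ be the inclusion of the fixed locus and $r=\iota_T^*\colon H_T^*(X)\to H_T^*(F)$ the induced restriction map. Since $T$ acts trivially on $F$ we have $F_T=BT\times F$, so $H_T^*(F)\cong \Q[u]\otimes_{\Q}H^*(F)$ as a ring, and in particular as a free $\Q[u]$-module; composing $r$ with reduction modulo $u-1$ produces a ring map $H_T^*(X)\to H^*(F)$ that kills $u-1$ and hence factors through $H_T^*(X)/\ideal{u-1}$. To see this factored map is an isomorphism I would invoke the localization theorem (Borel, Atiyah--Bott): $r$ becomes an isomorphism after inverting $u$, so $\ker r$ and $\operatorname{coker} r$ are $u$-power-torsion $\Q[u]$-modules. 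Equivariant formality makes $H_T^*(X)$ torsion-free over $\Q[u]$, so $\ker r=0$, leaving a short exact sequence $0\to H_T^*(X)\xrightarrow{r}H_T^*(F)\to C\to 0$ with $C=\operatorname{coker} r$ killed by powers of $u$. Applying $-\otimes_{\Q[u]}\Q[u]/\ideal{u-1}$ and noting that a $u$-power-torsion module $C$ satisfies $C/\ideal{u-1}=0$ and $\operatorname{Tor}_1^{\Q[u]}(C,\Q[u]/\ideal{u-1})=0$ (if $u^nc=0$ then $c\equiv uc\equiv\cdots\equiv u^nc=0$ modulo $(u-1)C$, and similarly $(u-1)c=0$ forces $c=u^nc=0$), one concludes from the long exact $\operatorname{Tor}$ sequence that $H_T^*(X)/\ideal{u-1}\xrightarrow{\sim}H_T^*(F)/\ideal{u-1}=H^*(F)$, an isomorphism of rings since every map in sight is a ring map.

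I expect the point that needs the most care is the appeal to the localization theorem: it requires $X$ (equivalently $F$) to be suitably finite — a finite-dimensional $T$-CW complex or homotopy equivalent to one, which does hold for the arrangement complements of interest — and it is this theorem, not formality, that forces $\ker r$ and $\operatorname{coker} r$ to be $u$-torsion in the first place. Formality then enters only to upgrade ``$u$-torsion kernel'' to ``zero kernel,'' streamlining the last step. The remaining ingredients — the identification $H_T^*(F)\cong \Q[u]\otimes H^*(F)$ coming from $F_T=BT\times F$, and the elementary fact that a $u$-power-torsion module vanishes after base change to $u=1$ — are routine.
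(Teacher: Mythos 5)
Your proof is correct and follows essentially the same route as the paper: the first isomorphism is read off from the definition of equivariant formality, and the second combines the localization theorem (the paper cites \cite[(6.2)(2)]{GKM}) with freeness of $H_T^*(X)$ over $\Q[u]$. The only difference is one of detail — you spell out the $u$-power-torsion and $\operatorname{Tor}$ bookkeeping needed to pass from ``isomorphism after inverting $u$'' to ``isomorphism after setting $u=1$,'' a step the paper leaves implicit in its ``and hence.''
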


\begin{proof}
  By \cite[(6.2)(2)]{GKM}, the restriction map on equivariant cohomology induces an isomorphism
  \[ H_T^*(X)[u^{-1}] \to H_T^*(F)[u^{-1}] \cong H^*(F)[u,u^{-1}] \]
  and hence
  \[ H^*(F) \cong H_T^*(X)/\ideal{u-1} \]
  This first isomorphism follows from formality.
\end{proof}

\begin{cor}\label{assocgrad}
The ring $H^*(F)$ has a natural filtration, its Rees algebra is isomorphic to $H_T^*(X)$, and its associated graded algebra is isomorphic to $H^*(X)$. 
\end{cor}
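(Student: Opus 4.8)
The plan is to extract everything from Proposition \ref{specialize} together with equivariant formality, treating $H_T^*(X)$ as a flat $\Q[u]$-algebra and running the standard Rees-algebra/degeneration yoga. First I would fix the filtration on $H^*(F)$: since $H_T^*(X)$ is a free (in particular torsion-free) graded $\Q[u]$-module by formality, multiplication by $(u-1)$ is injective, so every class $\alpha \in H^*(F)$ lifts uniquely to $H_T^*(X)$ modulo $(u-1)$; I then declare $F^{\le k} H^*(F)$ to be the image of $\bigoplus_{j \le k} H_T^{2j}(X)$ under the quotient map of Proposition \ref{specialize}. This is an exhaustive, multiplicative filtration because the grading on $H_T^*(X)$ is multiplicative and, by formality, $H_T^{2j}(X)$ is concentrated in even degrees and generated over $\Q[u]$ by lifts of a basis of $\bigoplus_{i \le j} H^{2i}(X)$.

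Next I would identify the Rees algebra $\mathrm{Rees}\,H^*(F) := \bigoplus_k F^{\le k} H^*(F)\, t^k$ with $H_T^*(X)$. The natural map sends a lift in $H_T^{2j}(X)$ (for $j \le k$) to its class times $t^k$; using the homogeneous generator $u \in H_T^2(pt)$ to absorb the powers of $t$, one checks this is a well-defined graded ring homomorphism. It is surjective because formality gives $\Q[u]$-module generators of $H_T^*(X)$ in each degree coming from lifts of an $H^*(X)$-basis, and these are exactly the generators of the Rees algebra; it is injective by a graded-dimension count, again using $H_T^*(X) \cong \Q[u] \otimes H^*(X)$ as graded $\Q[u]$-modules, exactly as in the dimension-count argument of Example \ref{r3example}. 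Finally, reducing the Rees algebra modulo $t$ (equivalently, setting $u = 0$ in $H_T^*(X)$) gives the associated graded $\gr H^*(F) \cong H_T^*(X)/\ideal{u} \cong H^*(X)$ by the first isomorphism of Proposition \ref{specialize}; one must check this algebra isomorphism is the one carrying the $k$-th graded piece $F^{\le k}/F^{\le k-1}$ to $H^{2k}(X)$, which follows by construction since the $k$-th graded piece is the image of $H_T^{2k}(X)$.

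The main obstacle is bookkeeping rather than depth: one must verify that the filtration is well-defined independent of the choice of lifts (this is where torsion-freeness from formality is essential) and that the Rees/degeneration maps are ring maps and not merely $\Q[u]$-module maps. The even-concentration of $H_T^*(X)$ — which holds in the cases of interest, e.g. Example \ref{r3example}, but should be assumed or noted here — is what makes "degrees halved" literally true and keeps the indexing clean; without it one would still get a filtration but the statement about $\gr$ being isomorphic to $H^*(X)$ as a \emph{graded} ring needs the even vanishing to align the two gradings.
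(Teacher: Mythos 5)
Your argument is correct and is exactly the standard Rees-algebra degeneration that the paper leaves implicit: Corollary \ref{assocgrad} is stated without proof, as an immediate consequence of Proposition \ref{specialize} together with the freeness of $H_T^*(X)$ over $\Q[u]$ coming from equivariant formality, and your filtration-by-images-of-degree construction is the intended one. The one step worth tightening is the injectivity of $H_T^*(X)\to\mathrm{Rees}\,H^*(F)$: a "graded-dimension count" is slightly circular here because the dimensions of the filtration pieces $F^{\le k}H^*(F)$ are not known a priori, whereas the $u$-torsion-freeness you already invoke gives it directly (a homogeneous class mapping to $0$ in $H_T^*(X)/\ideal{u-1}$ lies in $(u-1)H_T^*(X)$, and comparing graded components forces it to vanish).
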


\begin{example}
  In Example \ref{r3example}, we computed the equivariant cohomology of $\R^3\setminus \set 0$. The presentation that  $H^*(Z^- \sqcup Z^+)$ inherits from $H_T^*(\R^3\setminus \set 0)$ is the same presentation provided in \cite{VG}
  \[H^*(Z^- \sqcup Z^+) = \Q[y] / \ideal{y^2-y} . \]
  If we filter this ring by degree, then its corresponding associated graded algebra is 
  \[ \gr_{\mathrm{deg}} H^*(Z^- \sqcup Z^+) \cong \Q[y]/\ideal{y^2} \]
  which is isomorphic to $H^*(\R^3\setminus \set 0)$.
\end{example}

  Suppose that $W$ is a group that acts on $X$. If the action of $W$ commutes with the action of $T$, then $W$ also acts on $F:=X^T$. 
  
\begin{prop}\label{equiv}
   If the action of $W$ commutes with the action of $T$, then $W$ acts on $H_T ^*(X)$. If $X$ is equivariantly formal, then the isomorphisms of Proposition \ref{specialize} are $W$-equivariant.
\end{prop}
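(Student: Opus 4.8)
The plan is to give the $W$-action on $H_T^*(X)$ a geometric origin and then observe that every arrow occurring in the proof of Proposition~\ref{specialize} is the map on cohomology induced by a $W$-equivariant map of spaces, so that $W$-equivariance of the isomorphisms becomes pure naturality.

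First I would pin down the $W$-action. Since the $W$- and $T$-actions on $X$ commute, the product $W\times T$ acts on $X$. Equip a contractible free $T$-space $ET$ with the trivial $W$-action; this is harmless, because $X_T$, and hence $H_T^*(X)$, does not depend on the choice of $ET$ up to canonical isomorphism. Then $W\times T$ acts on $ET\times X$, and the $W$-action descends to $X_T = ET\times_T X$ because $W$ commutes with $T$ there. Functoriality of cohomology then yields the $W$-action on $H_T^*(X) = H^*(X_T)$. Crucially, the bundle projection $X_T\to BT$ is $W$-invariant (as $W$ acts trivially on $ET$), so the structure homomorphism $H_T^*(pt)\to H_T^*(X)$ is $W$-equivariant for the trivial action on $H_T^*(pt)$; in particular $W$ fixes $u$, and therefore the ideals $\ideal{u}$ and $\ideal{u-1}$ of $H_T^*(X)$ are $W$-stable.

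Now for the second assertion, assume $X$ is equivariantly formal. The fiber inclusion $\iota\colon X\hookrightarrow X_T$ (using a basepoint of $ET$, which is $W$-fixed) and the inclusion $F_T\hookrightarrow X_T$ of the Borel construction of $F=X^T$ are both $W$-equivariant — the latter because $W$ preserves $F$, the two actions commuting — so the restriction maps $\iota^*\colon H_T^*(X)\to H^*(X)$ and $H_T^*(X)\to H_T^*(F)$ are $W$-equivariant; likewise the ring isomorphism $H_T^*(F)\cong H^*(F)\otimes H_T^*(pt)$ coming from $F_T = F\times BT$ intertwines the $W$-actions, with $W$ acting only on the first tensor factor. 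By equivariant formality the first map descends to the isomorphism $H_T^*(X)/\ideal{u}\xrightarrow{\sim} H^*(X)$ of Proposition~\ref{specialize}, which is then $W$-equivariant, being induced by a $W$-equivariant map on a $W$-stable quotient. Similarly, the composite $H_T^*(X)\to H_T^*(F)\to H_T^*(F)/\ideal{u-1} = H^*(F)$ is $W$-equivariant and factors through the isomorphism $H_T^*(X)/\ideal{u-1}\xrightarrow{\sim} H^*(F)$ of Proposition~\ref{specialize}, which is therefore $W$-equivariant as well.

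The one step requiring genuine care is the assertion that $W$ acts trivially on the equivariant parameter $u$ — equivalently, that the $W$-action on $X_T$ can be taken to cover the identity on $BT$ — and this is exactly what choosing $ET$ with the trivial $W$-action provides. Everything else is naturality, and the single imported ingredient, the localization isomorphism of \cite[(6.2)(2)]{GKM} used in Proposition~\ref{specialize}, is functorial for equivariant maps because it is simply the restriction homomorphism to the $T$-fixed locus.
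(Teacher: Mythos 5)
Your proposal is correct and follows essentially the same route as the paper: let $W$ act trivially on $ET$, check the action descends to $X_T$ because the two actions commute, note the projection to $BT$ is $W$-invariant so $u$ is fixed, and conclude by naturality. You spell out the $W$-equivariance of the fiber inclusion $X\hookrightarrow X_T$ and of $F_T\hookrightarrow X_T$ in more detail than the paper, which compresses that half into the remark that $H_T^*(X)$ is a flat family of $W$-representations, but the substance is the same.
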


\begin{proof}
  If we let $W$ act on $ET$ trivially, then it is clear that $W$ acts on $ET \times X$. To show that the action respects the relation $(y,tx)\sim (ty,x)$ we use the facts that the actions commute and hence
  \[ w\cdot (y,tx) = (y,wtx) = (y,twx) \sim (ty,wx) = w\cdot (ty,x) . \] 
  
  Since $W$ acts on $X_T$ and the projection to $BT$ is $W$-invariant, we obtain an action of $W$ on $H_T ^*(X)$ that fixes $u$. If $X$ is formal, then $H_T^*(X)$ is a flat family of $W$-representations and the result follows.
\end{proof}

\begin{rem}\label{semisimple}
  If $W$ is a finite group, the category of $W$-representations is semisimple and hence $W$-representations are completely reducible. Hence, $H^*(X)$ and $H^*(F)$ are isomorphic as $W$-representations as $\gr H^*(F) \cong H^*(X)$ and complementary subrepresentations are isomorphic to quotients.
\end{rem}

\section{Equivariant Formality}

In this section we will establish that $M_3(\A)$ is equivariantly formal. To do this we will appeal to the reasoning in Example \ref{r3example} and show that $H^*(M_3(\A))$ is concentrated in even degree.

Suppose $H_1 \in \A$, let $\A ' = \A \setminus H_1$, and let 
\[\A '' = \A^{H_1} = \set{ H \cap H_1 \mid H\in \A' , H\cap H_1 \neq \varnothing }. \]
Note that $M_3(\A') = M_3(\A) \cup M_3(\A'')$. The submanifold $M_3(\A'')$ has codimension 3 in $M_3(\A')$. 

\begin{prop}\label{evendegree}
  $H^*(M_3(\A))$ is concentrated in even degrees.
\end{prop}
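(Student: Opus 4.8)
The plan is to prove the statement by induction on the number $n = \abs{\A}$ of hyperplanes, using the deletion–restriction decomposition recorded just before the statement together with the Gysin long exact sequence; the point is that removing a stratum of \emph{odd} codimension does not disturb the parity of cohomological degrees.

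When $n = 0$ the space $M_3(\varnothing) = V^3$ is contractible, so its cohomology sits in degree $0$. For the inductive step fix $H_1 \in \A$ and set $\A' = \A \setminus H_1$ and $\A'' = \A^{H_1}$. Each of these arrangements involves strictly fewer than $n$ hyperplanes: $\A'$ has $n-1$ of them in $V$, while $\A''$ has at most $n-1$ and lives in the smaller space $H_1$. So by induction $H^*(M_3(\A'))$ and $H^*(M_3(\A''))$ are both concentrated in even degrees. Since $M_3(\A)$ is the open complement of the closed, codimension-$3$ submanifold $M_3(\A'')$ of $M_3(\A')$, the Gysin sequence reads
\[ \cdots \to H^{k-3}(M_3(\A'')) \to H^k(M_3(\A')) \to H^k(M_3(\A)) \to H^{k-2}(M_3(\A'')) \to H^{k+1}(M_3(\A')) \to \cdots \]
For $k$ odd we have $H^k(M_3(\A')) = 0$ and, because $k - 2$ is again odd, $H^{k-2}(M_3(\A'')) = 0$; exactness then forces $H^k(M_3(\A)) = 0$, completing the induction. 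Note that the argument uses in an essential way that the removed stratum has odd codimension, so that the ``error term'' $H^{k-2}(M_3(\A''))$ has the same degree-parity as $k$; this is exactly the reason the analogous statement fails for $M_2(\A)$.

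What needs care, roughly in the order I would address it: first, the identification $M_3(\A') = M_3(\A) \cup M_3(\A'')$ in which $M_3(\A'')$ is, via the affine isomorphism $\omega_{1,3}^{-1}(0) \cong H_1 \times \overline{H_1} \times \overline{H_1}$, the space $M_3$ of the restriction arrangement inside the vector space $H_1$ — here one checks that the hyperplanes $H_i$ parallel to $H_1$ contribute nothing, while for $H_i$ meeting $H_1$ the restricted forms $\omega_i|_{H_1}$ are again non-constant affine, so that $\A''$ is genuinely an arrangement of the type handled by the inductive hypothesis; second, that the Gysin sequence is valid with $\Q$-coefficients, which is automatic since $M_3(\A')$ and $M_3(\A'')$ are open subsets of a real vector space and of an affine subspace (hence orientable) and the normal bundle of $M_3(\A'')$ is the restriction of the trivial $\R^3$-bundle of $H_1 \times \overline{H_1} \times \overline{H_1}$ in $V^3$; and third, that induction on $n$ suffices provided the statement is proved simultaneously for all arrangements in all finite-dimensional real vector spaces, so that passing to $\A''$ in a smaller space is legitimate. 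The only genuinely substantive point among these is the first — pinning down that restriction really produces another arrangement of the same kind, to which the hypothesis applies; everything else is formal bookkeeping.
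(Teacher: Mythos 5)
Your proposal is correct and follows essentially the same route as the paper: induction on $\abs{\A}$ via deletion--restriction, using the long exact sequence of the pair $(M_3(\A'),M_3(\A))$ combined with excision and the Thom isomorphism for the trivial rank-$3$ normal bundle of $M_3(\A'')$ --- which is exactly the Gysin sequence you write down --- so that the odd codimension preserves degree parity. The only cosmetic difference is your base case $n=0$ (contractible $V^3$) versus the paper's $n=1$ (where $M_3(\A)\simeq S^2$); both work.
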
 

The proof of this proposition is strongly influenced by the proof of \cite[5.80,5.81]{OT} and the discussion on p. 213.

\begin{proof}
 When $\A$ consists of one hyperplane, we may reduce this to the case of a point in a line. In this case, $M_3(\A)$ is homotopic to $S^2$, and thus its cohomology is concentrated in even degree.

 Suppose this result is true for $\abs{\A}\leq n$. Suppose that $\A$ has $n+1$ hyperplanes. Inside $M_3(\A')$, $M_3(\A'')$ has a tubular neighborhood $E$ which is the disk bundle of the trivial bundle over $M_3(\A'')$. Let $E_0 = E\cap M_3(\A)$ be the punctured disk bundle. The Thom isomorphism theorem gives us that 
 \[ H^{k+1}(E, E_0) \cong H^{k-2} (E) \]
 as the Thom space for the trivial bundle is the suspension of $E \sqcup \set{pt}$ iterated 3 times.
 
 By excision $H^{k+1}(M_3(\A'),M_3(\A)) \cong H^{k+1}(E,E_0) \cong H^{k-2}(E)$. Since $E$ is homotopy equivalent to $M_3(\A'')$, we get that $H^{k-2}(E) \cong H^{k-2}(M_3(\A''))$.  The long exact sequence in cohomology gives 
 \[ \cdots \rightarrow H^k(M_3(\A')) \to H^k (M_3(\A)) \to H^{k+1}(M_3(\A'),M_3(\A)) \rightarrow \cdots  .\]
 Replacing $H^{k+1}(M_3(\A'),M_3(\A))$ with $H^{k-2}(M_3(\A''))$ and using the inductive hypothesis we get that the terms to the left and right of $H^k(M_3(\A))$ in the long exact sequence are nontrivial only when $k$ is even.
\end{proof}

\begin{cor}
  The sequence 
  \[ 0 \rightarrow H^k(M_3(\A')) \to H^k (M_3(\A)) \to H^{k-2}(M_3(\A'')) \rightarrow 0 \]
  is exact. 
\end{cor}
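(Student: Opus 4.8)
The plan is to combine the long exact sequence of the pair $(M_3(\A'),M_3(\A))$ that already appeared in the proof of Proposition~\ref{evendegree} with a parity count. First I would extract from that proof the identification, valid for every index $j$,
\[ H^{j+1}(M_3(\A'),M_3(\A)) \;\cong\; H^{j-2}(M_3(\A'')), \]
obtained by excision together with the Thom isomorphism for the trivial rank-$3$ normal bundle of $M_3(\A'')$ in $M_3(\A')$. Plugging this into the relevant stretch of the long exact sequence gives
\[ H^{k-3}(M_3(\A'')) \to H^k(M_3(\A')) \to H^k(M_3(\A)) \to H^{k-2}(M_3(\A'')) \to H^{k+1}(M_3(\A')), \]
where the first and last terms arise from $H^k(M_3(\A'),M_3(\A))$ and $H^{k+1}(M_3(\A'))$ respectively, the middle arrow is restriction, and the map $H^k(M_3(\A)) \to H^{k-2}(M_3(\A''))$ is the connecting-type map of the corollary (replacing a term in an exact sequence by an isomorphic copy preserves exactness).

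Next I would apply Proposition~\ref{evendegree} three times, to $\A$, to $\A'$, and to $\A''$ — each is a genuine real hyperplane arrangement, with $\A''$ living in the hyperplane $H_1$ — to conclude that $H^*(M_3(\A))$, $H^*(M_3(\A'))$ and $H^*(M_3(\A''))$ all vanish in odd degrees. If $k$ is odd, all three groups in the asserted sequence vanish and exactness is trivial. If $k$ is even, then $k-3$ and $k+1$ are odd, so $H^{k-3}(M_3(\A'')) = 0$ and $H^{k+1}(M_3(\A')) = 0$; the first vanishing forces $H^k(M_3(\A')) \hookrightarrow H^k(M_3(\A))$ and the second forces $H^k(M_3(\A)) \twoheadrightarrow H^{k-2}(M_3(\A''))$. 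Exactness in the middle is inherited verbatim from the long exact sequence, yielding the claimed short exact sequence.

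I do not expect a genuine obstacle here: this is a formal consequence of Proposition~\ref{evendegree} together with the machinery already set up in its proof. The only points requiring care are bookkeeping — applying the Thom-isomorphism degree shift consistently at both ends of the displayed five-term stretch, and observing that the composite $H^k(M_3(\A)) \to H^{k+1}(M_3(\A'),M_3(\A)) \xrightarrow{\sim} H^{k-2}(M_3(\A''))$ is precisely the map appearing in the statement, so that exactness of the five-term sequence transports directly to exactness of the three-term one.
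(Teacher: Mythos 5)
Your argument is correct and is exactly the paper's: the paper's own proof is the one-line remark that the long exact sequence from the proof of Proposition~\ref{evendegree}, combined with the vanishing of odd-degree cohomology, yields the short exact sequence. Your write-up simply spells out the same degree bookkeeping in full.
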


\begin{proof}
  Starting with the long exact sequence in the previous proof, we use that the odd degree cohomology vanishes and the result follows.
\end{proof}

From this short exact sequence we can deduce the following recursion which is also exhibited by the Orlik-Solomon algebra.

\begin{cor}\label{recursionM3}
  The Poincar\'e polynomial of $H^*(M_3(\A))$ follows the recursion
  \[ \mathrm{Poin}(M_3(\A)) = \mathrm{Poin}(M_3(\A')) + t^2 \mathrm{Poin}(M_3(\A '')) . \]
\end{cor}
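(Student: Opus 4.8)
The plan is to read off the recursion directly from the short exact sequence established in the preceding corollary,
\[ 0 \rightarrow H^k(M_3(\A')) \to H^k(M_3(\A)) \to H^{k-2}(M_3(\A'')) \rightarrow 0, \]
which holds for every $k$. First I would note that all three spaces $M_3(\A)$, $M_3(\A')$, $M_3(\A'')$ are complements of finitely many affine subspaces, hence homotopy equivalent to finite CW complexes, so their rational cohomology is finite dimensional in each degree and vanishes in all but finitely many degrees; this guarantees the Poincaré polynomials are well-defined polynomials in $t$.

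Next, since the displayed sequence is an exact sequence of finite-dimensional $\Q$-vector spaces, dimensions are additive:
\[ \dim_\Q H^k(M_3(\A)) = \dim_\Q H^k(M_3(\A')) + \dim_\Q H^{k-2}(M_3(\A'')) \]
for all $k$ (with the convention that negative-degree cohomology is zero, so that the formula is also correct for $k=0,1$). Multiplying this identity by $t^k$ and summing over $k \geq 0$, the left-hand side becomes $\mathrm{Poin}(M_3(\A))$, the first term on the right becomes $\mathrm{Poin}(M_3(\A'))$, and in the second term the substitution $j = k-2$ yields $\sum_{j} \dim_\Q H^{j}(M_3(\A''))\, t^{j+2} = t^2\, \mathrm{Poin}(M_3(\A''))$, which gives the claimed recursion.

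There is essentially no obstacle here: the content is entirely contained in the short exact sequence of the previous corollary, which in turn rested on Proposition \ref{evendegree}. The only point requiring the slightest care is the bookkeeping at low degrees and the finiteness needed to make "Poincaré polynomial" meaningful, both of which are immediate. I would also remark, for completeness, that together with the base case $\mathrm{Poin}(M_3(\A)) = 1 + t^2$ when $\abs{\A} = 1$, this recursion determines $\mathrm{Poin}(M_3(\A))$ completely and shows it coincides with the Poincaré polynomial one obtains from the Orlik--Solomon-type deletion--restriction recursion, with $t$ replaced by $t^2$.
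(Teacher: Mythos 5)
Your proof is correct and is exactly the argument the paper intends: the recursion is read off from the short exact sequence of the preceding corollary by additivity of dimensions, with the $t^2$ accounting for the degree shift $k \mapsto k-2$. The paper leaves this step implicit, so your careful bookkeeping matches its (unwritten) proof.
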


Note that the $t^2$ comes from the fact that the map from $H^k (M_3(\A))$ to $H^{k-2}(M_3(\A''))$ is a degree $-2$ map.

\section{Main Results}

We first define an action of $T=S^1$ on $M_3(\mathcal{A})$. In Example \ref{r3example}, we let $T$ act by rotation about the $x$-axis, or equivalently, we thought of $\R^3$ as $\R \oplus \C$ and let $T$ act by multiplication on the complex coordinate, leaving the real coordinate fixed. We may extend this action to $V^3 \cong V \otimes(\R\oplus \C)$. Note that $\omega_{i,3}$ is $T$-equivariant and $0\in \R^3$ is $T$-fixed, so $T$ acts on \[M_3(\mathcal{A}) = \bigcap_{i=1} ^n \omega_{i,3}^{-1} (\R^3 \setminus \set 0).\] The fixed point set of this action is $M_1(\mathcal{A})$.

As $M_3(\A)$ is equivariantly formal, Corollary \ref{assocgrad} therefore yields the following result.

\begin{prop}\label{grec}
  With respect to the filtration coming from the equivariant cohomology of $M_3(\mathcal{A})$ via Corollary \ref{assocgrad},
  \[ \gr H^0(M_1(\mathcal{A})) \cong H^*(M_3(\mathcal{A})). \]
\end{prop}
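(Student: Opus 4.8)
The plan is to obtain Proposition \ref{grec} as a direct application of Corollary \ref{assocgrad} to the $T=S^1$-space $X=M_3(\mathcal{A})$, so that the only thing requiring real justification is the hypothesis of that corollary, namely equivariant formality of $M_3(\mathcal{A})$. First I would recall the action just defined: writing $V^3\cong V\otimes(\R\oplus\C)$ and letting $T$ act by scalar multiplication on the $\C$-factor, the maps $\omega_{i,3}$ are $T$-equivariant and $0\in\R^3$ is fixed, so $T$ acts on $M_3(\mathcal{A})=\bigcap_i\omega_{i,3}^{-1}(\R^3\setminus\set{0})$, and its fixed locus is precisely $M_1(\mathcal{A})$ (the configurations all of whose $\C$-coordinates vanish).

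The one substantive step is to check that $M_3(\mathcal{A})$ is equivariantly formal. By Proposition \ref{evendegree}, $H^*(M_3(\mathcal{A}))$ is concentrated in even degrees; since $H^*(BT)\cong\Q[u]$ is also concentrated in even degrees, the $E_2$-page $H^*(BT)\otimes H^*(M_3(\mathcal{A}))$ of the Serre spectral sequence of the fibration $M_3(\mathcal{A})_T\to BT$ lives entirely in even total degree. Hence every differential vanishes and the spectral sequence degenerates at $E_2$ — this is exactly the argument already used for $\R^3\setminus\set{0}$ in Example \ref{r3example}. Therefore $M_3(\mathcal{A})$ is equivariantly formal, and in particular Proposition \ref{specialize} and Corollary \ref{assocgrad} apply.

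Finally I would invoke Corollary \ref{assocgrad} with $X=M_3(\mathcal{A})$ and $F=X^T=M_1(\mathcal{A})$: it equips $H^*(M_1(\mathcal{A}))$ with a natural filtration whose Rees algebra is $H_T^*(M_3(\mathcal{A}))$ and whose associated graded ring is $H^*(M_3(\mathcal{A}))$. Since $M_1(\mathcal{A})$ is a disjoint union of open convex chambers, hence a disjoint union of contractible sets, its cohomology is concentrated in degree $0$, i.e. $H^*(M_1(\mathcal{A}))=H^0(M_1(\mathcal{A}))$; substituting this into the statement of Corollary \ref{assocgrad} gives $\gr H^0(M_1(\mathcal{A}))\cong H^*(M_3(\mathcal{A}))$. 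I do not anticipate a genuine obstacle here: the only place with mathematical content is the equivariant formality, and that has essentially been established in Section \ref{} already, since once Proposition \ref{evendegree} is in hand degeneration is automatic; identifying the fixed locus and recalling contractibility of chambers are routine. (The identification of this filtration with the Varchenko–Gelfand filtration is a separate matter, deferred to Theorem \ref{mainresult}.)
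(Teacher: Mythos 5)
Your proposal is correct and follows the same route as the paper: define the $T$-action on $V^3\cong V\otimes(\R\oplus\C)$, identify the fixed locus as $M_1(\mathcal{A})$, deduce equivariant formality from the even-degree concentration of Proposition \ref{evendegree} exactly as in Example \ref{r3example}, and then apply Corollary \ref{assocgrad}. The paper states this almost without proof in the two paragraphs preceding the proposition, so your write-up is if anything slightly more complete (e.g.\ the explicit remark that $H^*(M_1(\mathcal{A}))=H^0(M_1(\mathcal{A}))$ because chambers are contractible).
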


The remainder of this section will be devoted to computing presentations of $H ^*(M_3(\mathcal{A}))$ and $H_T ^*(M_3(\mathcal{A}))$, which we will use to show that the filtration of $H^0(M_1(\mathcal{A}))$ coming from equivariant cohomology is the same as the one coming from Proposition \ref{VGpres}. Our calculation of $H_T^*(M_3(\mathcal{A}))$ will make use of a generating set of the ordinary cohomology of $M_3(\mathcal{A})$.


\begin{lemma}\label{generators}
Consider the map $\psi:\Q[e_1, \ldots, e_n] \to H^*(M_3(\mathcal{A}))$ taking $e_i$ to $\omega_{i,3}^*([Z^+])\in H^2(M_3(\mathcal{A}))$. This map is surjective.
\end{lemma}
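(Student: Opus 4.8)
The plan is to induct on $n = |\mathcal{A}|$, mirroring the inductive structure already used in Proposition \ref{evendegree} and its corollaries, and in particular exploiting the short exact sequence
\[
0 \rightarrow H^k(M_3(\mathcal{A}')) \to H^k(M_3(\mathcal{A})) \to H^{k-2}(M_3(\mathcal{A}'')) \rightarrow 0,
\]
where $\mathcal{A}' = \mathcal{A}\setminus H_1$ and $\mathcal{A}'' = \mathcal{A}^{H_1}$. The base case $n=1$ is immediate: $M_3(\mathcal{A})$ is homotopy equivalent to $S^2$, whose cohomology is generated in degree $2$ by the class $[Z^+]$ pulled back along $\omega_{1,3}$ (this is exactly the computation in Example \ref{r3example}, where $[Z^+]_T$ maps to a generator of $H^*(S^2)$ under the forgetful map). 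For the inductive step, I would fix a surjective family of generators for $H^*(M_3(\mathcal{A}'))$ of the asserted form (with indices in $\{2,\ldots,n+1\}$), lift them via the inclusion $M_3(\mathcal{A}) \hookrightarrow M_3(\mathcal{A}')$, and then argue that together with one more class coming from $H_1$ they generate $H^*(M_3(\mathcal{A}))$.

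The key step is to identify the connecting/restriction map $H^k(M_3(\mathcal{A})) \to H^{k-2}(M_3(\mathcal{A}''))$ as a module map over the generators and to understand how the classes $\omega_{i,3}^*([Z^+])$ behave under it. Concretely: $M_3(\mathcal{A}'')$ sits as a codimension-$3$ submanifold in $M_3(\mathcal{A}')$, cut out by $\omega_{1,3} = 0$, and the surjection in the short exact sequence is (up to the Thom isomorphism) restriction to this locus. The hyperplanes $H\in\mathcal{A}'$ meeting $H_1$ restrict to the hyperplanes of $\mathcal{A}''$, so the restriction of $\omega_{i,3}^*([Z^+])$ to $M_3(\mathcal{A}'')$ is $\omega_{i,3}''^*([Z^+])$ for the corresponding hyperplane of $\mathcal{A}''$ (and the classes for hyperplanes of $\mathcal{A}'$ disjoint from $H_1$ restrict appropriately — here one may want to pass to the intersection lattice localized at $H_1$, which is standard). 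Hence by the inductive hypothesis applied to $\mathcal{A}''$, every class in $H^{k-2}(M_3(\mathcal{A}''))$ is a polynomial in restrictions of our generators. Combined with the fact (inductive hypothesis for $\mathcal{A}'$) that $H^k(M_3(\mathcal{A}'))$ is generated by the $\omega_{i,3}^*([Z^+])$, $i\neq 1$, a diagram chase in the short exact sequence shows $H^k(M_3(\mathcal{A}))$ is generated by all the $\omega_{i,3}^*([Z^+])$, $1\le i\le n+1$: given a class, subtract off a polynomial in the generators matching its image in $H^{k-2}(M_3(\mathcal{A}''))$, and what remains comes from $H^k(M_3(\mathcal{A}'))$, which is handled by the $i\neq 1$ generators.

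The main obstacle I anticipate is making the identification of the restriction map precise at the level of the specific Thom-class/Gysin maps — i.e. verifying that the connecting map in the Gysin sequence genuinely sends $\omega_{i,3}^*([Z^+])$ to the pullback of $[Z^+]$ along the restricted form, rather than to that class times some Euler-class correction, and checking that the class $\omega_{1,3}^*([Z^+])$ itself is not annihilated (so that it can absorb the ``extra'' dimension the recursion in Corollary \ref{recursionM3} predicts). One clean way around the geometry is to lift everything to equivariant cohomology, or simply to observe that surjectivity of $\psi$ only requires that the $\omega_{i,3}^*([Z^+])$ span as an algebra; since the short exact sequence is one of modules over the subalgebra generated by these classes, and both ends are generated (by induction), an induction on cohomological degree $k$ closes the argument without needing the sharp form of the Thom class. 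I would present the proof in that weaker, purely algebraic form to keep it short.
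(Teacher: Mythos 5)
Your route is genuinely different from the paper's. The paper takes the central case as known (quoting the presentation of $H^*(M_3(\A))$ from \cite{dLS}) and reduces the affine case to it by coning: it embeds $M_3(\A)$ into $M_3(c\A)$ and proves by induction, comparing the deletion--restriction sequences of $c\A$ and $\A$ via the short five lemma, that $i^*:H^*(M_3(c\A))\to H^*(M_3(\A))$ is surjective and kills $c\omega_{0,3}^*([Z^+])$. You instead run the deletion--restriction induction directly on $\A$, which would make the lemma self-contained and treat central and affine arrangements uniformly; the price is that you must control the degree $-2$ map $H^k(M_3(\A))\to H^{k-2}(M_3(\A''))$ on the nose, which the paper's argument never requires.

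That is where your proposal has a genuine gap. You correctly observe that this map is a map of $H^*(M_3(\A'))$-modules (acting on the target through restriction to $M_3(\A'')$), and that by exactness it kills every $e_i$ with $i\neq 1$, since those classes are restricted from $M_3(\A')$. Consequently the image of the subalgebra generated by $e_1,\ldots,e_n$ under this map is the submodule generated by the images of the powers $e_1^m$, and the entire inductive step hinges on the single computation that the image of $e_1=\omega_{1,3}^*([Z^+])$ in $H^0(M_3(\A''))\cong\Q$ is a nonzero scalar. Your closing claim that ``an induction on cohomological degree closes the argument without needing the sharp form of the Thom class'' is therefore not tenable: if that scalar were $0$, every $e_i$ would lie in the image of the subring $H^*(M_3(\A'))$, hence so would the whole subalgebra they generate, and the generators could not surject (the recursion of Corollary \ref{recursionM3} forces $H^{*-2}(M_3(\A''))\neq 0$). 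So the computation you flag as the ``main obstacle'' is not an optional refinement to be sidestepped --- it is the content of the inductive step. It is true, and provable by naturality: the tubular neighborhood of $M_3(\A'')$ is pulled back along $\omega_{1,3}$ from the standard neighborhood of $0$ in $\R^3$, reducing the claim to the one-hyperplane case $H^2(\R^3\setminus\set{0})\to H^0(pt)$, $[Z^+]\mapsto\pm 1$; but that reduction must be carried out, and with it your argument closes.
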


\begin{rem}
In the case that $\mathcal{A}$ is central, this is established in \cite{dLS} as they provide a presentation of $H^*(M_3(\A))$. 
\end{rem}

Before we prove the result in the case that $\A$ is affine, we describe a construction called coning. Given an affine arrangement $\A = \set{H_1, \ldots , H_n}$ in a vector space $V$, the cone of $\A$ denoted $c\A$ consists of hyperplanes $\set{ cH_1, \ldots, cH_n }$ corresponding to the hyperplanes of $\A$ along with a new hyperplane $cH_0$ in $V \oplus \R$ whose defining linear form is $c\omega_0(v,r) = -r$. While any scalar multiple of this linear form would define the same hyperplane, the reasoning behind the choice of $-r$ will be apparent when we start the computation. If $H_i = \omega_i^{-1}(0) = \overline{\omega_i}^{-1}(a_i)$, then $cH_i := \set{ (v,r) \mid \overline{\omega_i}(v)=ra_i}$. Let $c\omega_i : V \times \R \to \R$ be defined by 
\[ c\omega_i (v,r) = \overline{\omega_i}(v)-ra_i .\]
Thus $cH_i = c\omega_i ^{-1}(0)$. The resulting arrangement $c\A$ is central.

Note that there is a natural inclusion of $M_1(\A)$ into $M_1(c\A)$ by $v \mapsto (v,1)$. Similarly, we get an inclusion of $M_3(\A)$ into $M_3(c\A)$ given by 
\[i: (v_1, v_2, v_3) \mapsto (v_1, v_2, v_3, (1,0,0)). \] 
Note that $c\omega_{0,3} ^{-1}(Z^+)$ doesn't intersect the embedding of $M_3(\A)$ into $M_3(c\A)$ as $c\omega_{0,3} ^{-1}(Z^+)$ contains everything of the form $(v_1,v_2,v_3, -r,0,0)$ in $M_3(c\A)$ such that $r>0$. 

\begin{proof}
  To prove Lemma \ref{generators}, we will show the following. The map 
    \[ i^* : H^*(M_3(c\A)) \to H^*(M_3(\A)) \]
    induced by the inclusion $i: M_3(\A) \to M_3(c\A)$ is surjective and the class $c\omega_{0,3}^*([Z^+])$ is in the kernel.

  First, note that $c(\A')=(c\A)'$ where the right hand side is constructed by removing the coned version $cH_1$ of the distinguished hyperplane $H_1$ in $\A$. Also, if $\A'' = \A^{H_1}$, then $c(\A'') = (c\A)'':= (c\A)^{cH_1}$ (up to differences in multiplicities). Thus we get a short exact sequence 
  \[ 0 \to H^*(M_3(c(\A'))) \to H^*(M_3(c\A)) \to H^*(M_3(c(\A''))) \to 0 . \]
  
  Suppose that $\A$ consists of one hyperplane. Then $c\A$ consists of two intersecting hyperplanes. In this scenario, 
  \[ H^*(M_3(\A)) \cong \Q[\omega_{1,3} ^*([Z^+])]/\langle \omega_{1,3} ^*([Z^+]) ^2 \rangle \]
  and 
  \[ H^*(M_3(c\A)) \cong \Q[c\omega_{1,3} ^*([Z^+]),c\omega_{0,3} ^*([Z^+])] / \langle c\omega_{1,3} ^*([Z^+]) ^2 , c\omega_{0,3} ^*([Z^+]) ^2 \rangle. \] 
  Since $c\omega_{0,3} ^{-1}(Z^+)$ doesn't intersect the embedding of $M_3(\A)$ into $M_3(c\A)$ there is a surjective map $i^*$ given by
  \[ i^*(c\omega_{1,3} ^*([Z^+])) = \omega_{1,3} ^*([Z^+]), \, i^*(c\omega_{0,3} ^*([Z^+])) = 0 \]
  as
  \[i^{-1}(c\omega_{1,3} ^{-1}(Z^+)) = \omega_{1,3} ^{-1}(Z^+) . \]
  
  Suppose that this is true for arrangements with $k\leq n$ hyperplanes. There is a commutative diagram 
    \begin{diagram}
     0 & \rTo  &H^*(M_3((c\A)')) & \rTo & H^*(M_3(c\A)) &  \rTo & H^{*-2}(M_3((c\A)'')) & \rTo & 0 \\
       &          & \dTo           &          &   \dTo {i^*}                                		& 	   & \dTo \\
    0 & \rTo & H^*(M_3(\A')) & \rTo & H^*(M_3(\A))& \rTo & H^{*-2}(M_3(\A''))  & \rTo & 0 & .
  \end{diagram} 
  The left square commutes because it is induced from inclusions of spaces. The right square commutes by naturality of the Thom isomorphism. 
  
  By our inductive hypothesis, the downward maps on the left and right are surjective. By the short five lemma, the map $i^* : H^*(M_3(c\A)) \to H^*(M_3(\A))$ is also surjective. The statement about the kernel follows from the inductive hypothesis and the commutativity of the diagram.

This tells us that, not only is there a map 
$\psi:\Q[e_1, \ldots, e_n] \to H^*(M_3(\mathcal{A}))$ taking $e_i$ to $\omega_{i,3}^*([Z^+])\in H^2(M_3(\mathcal{A}))$, but also that this map is surjective. 
\end{proof}


In the following result, we give a proof that an algebra which we will call $B(\A)$ satisfies the same recursion on Poincar\'e polynomials that $H^*(M_3(\A))$ satisfies. This recursion will be used in the proof of Theorem \ref{tpres} to establish that $B(\A) \cong H^*(M_3(\A))$.

\begin{lemma}\label{recursion}
    Let $B(\A) := \Q[e_{H_1}, \ldots, e_{H_n}] / \mathcal{I}_0$ with $\deg e_{H_i} = 2$ where $\mathcal{I}_0$ is generated by the following families of relations:
  \[\begin{array}{l l}
      1) &e_{H_i}^2 \text{ for } i\in \set{1,\ldots, n} \\
      \\
      2) & \displaystyle \prod_{i\in S} e_{H_i}  \text{ if }\bigcap_{i\in S} H_i = \varnothing \\
      \\
      3) & \displaystyle \sum_{k\in S^{-}}\left( \prod_{i\in S^+} e_{H_i} \times \prod_{k\neq j\in S^{-}} e_{H_j}  \right) - \sum_{k\in S^+}\left(  \prod_{k\neq i\in S^+} e_{H_i} \times \prod_{j\in S^{-}} e_{H_j} \right)  \\
      \\
      &\text{if } \displaystyle \bigcap_{i\in S^+} H_i ^+ \cap \bigcap_{j\in S^-} H_j ^- = \varnothing \text{ and } \bigcap _{ i\in (S^+ \cup S^-)} H_i \neq \varnothing .\\
      \\
    \end{array}\]  
    The Poincar\'e polynomial of the algebra $B(\A)$ satisfies the relation 
    \[ \mathrm{Poin}(B(\A)) = \mathrm{Poin}(B(\A')) + t^2 \mathrm{Poin}(B(\A '')) . \]
\end{lemma}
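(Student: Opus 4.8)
The plan is to set up a deletion–restriction argument that exactly mirrors the topological recursion of Corollary \ref{recursionM3}. Fix a hyperplane $H_1 \in \A$ and form $\A' = \A \setminus H_1$ and $\A'' = \A^{H_1}$. I would construct two maps: a surjection $B(\A) \twoheadrightarrow B(\A')$ obtained by sending $e_{H_1} \mapsto 0$ (and noting that, after this substitution, the relations of $\mathcal{I}_0$ for $\A$ involving $H_1$ either become trivial or reduce to relations of $\mathcal{I}_0$ for $\A'$, while those not involving $H_1$ are precisely the relations for $\A'$), and a map $B(\A'')[-2] \to B(\A)$ sending $e_{\bar{H}} \mapsto e_{H_1} e_{H}$ for a chosen hyperplane $H \in \A'$ with $H \cap H_1 = \bar{H}$, shifted up in degree by $2$. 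The goal is to show that the resulting sequence
\[
0 \to B(\A'') [-2] \to B(\A) \to B(\A') \to 0
\]
is exact; taking Poincaré polynomials then gives the claimed recursion immediately, since the degree shift contributes the factor $t^2$.

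First I would check the two maps are well defined. For $B(\A) \to B(\A')$ this is the routine substitution check described above: one verifies relation-family by relation-family that setting $e_{H_1}=0$ carries $\mathcal{I}_0(\A)$ into $\mathcal{I}_0(\A')$, using that $\bigcap_{i \in S} H_i = \varnothing$ still holds after dropping the index $1$ from $S$ only when... — more precisely, that a relation of type (2) for $\A$ not mentioning $H_1$ is literally a relation of type (2) for $\A'$, and one mentioning $H_1$ maps to $0$; similarly for types (1) and (3), where the "symmetrized" type-(3) relation, after killing $e_{H_1}$, collapses onto the corresponding relation for the sub-collection of $S^\pm$ avoiding $1$. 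For $B(\A'') \to B(\A)$, $\omega \mapsto e_{H_1}\cdot(\text{lift})$, I must check the relations of $\mathcal{I}_0(\A'')$ are killed; this uses that a flat of $\A''$ is a flat of $\A$ containing $H_1$, so an empty-intersection condition for $\A''$ pulls back (after multiplying by $e_{H_1}$) to an empty-intersection condition for $\A$, hence lands in relation family (2) or (3) of $\mathcal{I}_0(\A)$, together with $e_{H_1}^2 = 0$ to handle repeated factors. One must also confirm the map is independent of the choice of $H$ projecting to a given $\bar H$; this should follow from the type-(3) relations applied with $S^+ = \{1\}$ and $S^-$ indexing two such choices (or a direct two-term relation), modulo $e_{H_1}^2$.

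The heart of the argument — and the step I expect to be the main obstacle — is exactness in the middle, equivalently that $\ker(B(\A) \to B(\A'))$ is exactly the image of $B(\A'')[-2]$, i.e. is the principal ideal $(e_{H_1}) \subseteq B(\A)$ and that multiplication by $e_{H_1}$ induces an isomorphism $B(\A'') \xrightarrow{\sim} (e_{H_1}) \subset B(\A)$ of graded vector spaces (degree shift $2$). That $\ker = (e_{H_1})$ is formal nonsense once the quotient map is shown to be exactly "kill $e_{H_1}$", which in turn needs the well-definedness analysis above to be sharp (the substitution map has a section $B(\A') \to B(\A)$ as algebras only if the $\A'$-relations lift, which they do since every relation family for $\A'$ is visibly a relation family for $\A$ — a flat empty for $\A'$ is empty for $\A$, etc.). Surjectivity of $B(\A'') \to (e_{H_1})$ is clear. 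Injectivity is the delicate point: I would prove it by producing, via the type-(2) and type-(3) relations, a well-defined algebra map $(e_{H_1}) \to B(\A'')$ — or better, a retraction $B(\A) \to B(\A'')[-2]$ onto the ideal — splitting $e_{H_1}\cdot(-)$; concretely, send any monomial divisible by $e_{H_1}$ to the corresponding monomial in $B(\A'')$ in the variables $e_{\bar H}$, and check this respects $\mathcal{I}_0(\A)$. If a clean retraction is awkward to write down directly, the fallback is a dimension count: bound $\dim_q (e_{H_1})$ from above by $\dim_q B(\A'')$ using that every element of $(e_{H_1})$ is an $\F$-combination of squarefree monomials $e_{H_1}\prod_{j\in J} e_{H_j}$ with $\{H_j \cap H_1\}_{j\in J}$ an independent-ish flat configuration of $\A''$, and combine with the already-established surjection to force equality. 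Either way, once exactness holds, taking graded dimensions of the short exact sequence yields $\mathrm{Poin}(B(\A)) = \mathrm{Poin}(B(\A')) + t^2\,\mathrm{Poin}(B(\A''))$, and the base case $|\A| = 1$ (where $B(\A) = \F[e]/(e^2)$ has Poincaré polynomial $1 + t^2$) anchors the induction.
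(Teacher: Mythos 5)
Your deletion--restriction sequence is set up in the wrong direction, and as a consequence the key map does not exist. The map $B(\A)\to B(\A')$ given by $e_{H_1}\mapsto 0$ is \emph{not} well defined: take $\A=\{H_1,H_2,H_3\}$ in $\R^2$ with $\omega_1=x$, $\omega_2=y$, $\omega_3=x+y$. Then $H_1^+\cap H_2^+\cap H_3^-=\varnothing$ and $H_1\cap H_2\cap H_3=\{0\}\neq\varnothing$, so family (3) with $(S^+,S^-)=(\{1,2\},\{3\})$ gives the relation $e_{H_1}e_{H_2}-e_{H_2}e_{H_3}-e_{H_1}e_{H_3}\in\mathcal I_0(\A)$. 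Setting $e_{H_1}=0$ yields $-e_{H_2}e_{H_3}$, which is \emph{not} in $\mathcal I_0(\A')$: the pair $\{H_2,H_3\}$ has nonempty intersection and realizes every sign pattern, so $B(\A')=\Q[e_{H_2},e_{H_3}]/\langle e_{H_2}^2,e_{H_3}^2\rangle$ and $e_{H_2}e_{H_3}\neq 0$ there. Your claim that the type-(3) relation ``collapses onto the corresponding relation for the sub-collection of $S^\pm$ avoiding $1$'' is exactly where this fails: if $S$ is a circuit containing $H_1$, then $S\setminus\{1\}$ is independent and supports no relation at all. The correct sequence (mirroring Orlik--Terao and the topological Corollary \ref{recursionM3}) runs the other way,
\[ 0 \to B(\A') \to B(\A) \to B(\A'')[-2] \to 0, \]
with the natural inclusion $e_H\mapsto e_H$ on the left (every relation for $\A'$ is a relation for $\A$) and a restriction map killing monomials not divisible by $e_{H_1}$ on the right. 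Reversing the arrows still leaves the hard part --- injectivity of $B(\A')\to B(\A)$, or equivalently that $\ker\bigl(B(\A)\to B(\A'')\bigr)$ is no larger than $B(\A')$ --- which your proposal defers to ``a clean retraction'' or ``a dimension count'' without supplying either.

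For comparison, the paper does not attempt a direct deletion--restriction on $B$ at all. It reduces to the central case by coning: it builds a sequence $0\to B(\A)\to B(c\A)\to B(\A)\to 0$, identifies $B$ with the broken-circuit module $BC$ by a diagram chase against the known central-case isomorphism of Cordovil, and then deduces the affine recursion from $\mathrm{Poin}(B(c\A))=(1+t^2)\mathrm{Poin}(B(\A))$ together with the already-established recursion for central arrangements. A direct deletion--restriction proof may well be possible, but it would have to use the sequence in the correct direction and would need an NBC-basis-type argument (or equivalent) to get exactness; as written, your argument does not go through.
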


\begin{proof}
  This is shown in \cite[2.7]{Co} in the case that $\A$ is a central arrangement. Additionally, it is shown that $B(\A)$ has a ``no broken circuit" basis. 
  
  First we will construct a complex 
  \[ 0 \to B(\A)\stackrel{t}{\longrightarrow}  B(c\A) \stackrel{\varphi}{\longrightarrow} B(\A) \to 0. \]
  Eventually, we will show that in fact this is a short exact sequence. First we need to define the maps.
  Let $S$ be a collection\footnote{If $S$ is an empty collection, then let $e_S = 1$.} of hyperplanes in $\A$, let $e_{S} = \prod_{H\in \Phi} e_H$, let $cS = \set{cH}_{H\in S}$, and let 
  \[\Phi: \Q[e_{cH_0},e_{cH_1}, \ldots, e_{cH_n}] / \langle e_{cH_i}^2 \rangle \to  \Q[e_{H_1}, \ldots, e_{H_n}] / \langle e_{H_i}^2 \rangle \] 
  be given by 
  \[ \Phi(e_{cS}) = e_S,\, \Phi(e_{cH_0} e_{cS}) = 0 .\]
  To show that this descends to a map $\varphi:B(c\A) \to B(\A)$, we need to show that generators of families 2 and 3 in $\mathcal I _0 (c\A)$ map to $\mathcal I_0 (\A)$. Note that since $c\A$ is central, families 1 and 3 generate $\mathcal I _0 (c\A)$. Family 3 is addressed in \cite[3.47, 3.49]{OT}. The map $t$ is also similarly addressed by \cite[3.47, 3.50]{OT} and sends $e_S$ to $e_{cH_0} e_{cS}$.
  
  Next, we will show that as vector spaces $B(\A) \cong BC(\A)$, the broken circuit module as defined in \cite{OT}. To define $BC(\A)$, we first need to impose an ordering on $\A$. We call a collection of hyperplanes dependent if $\bigcap_{i\in S} H_i \neq \varnothing$ and if there exists a decomposition of $S$ into $S^+ \sqcup S^-$ so that 
  \[ \bigcap_{i\in S^+} H_i ^+ \cap \bigcap_{j\in S^-} H_j ^- = \varnothing. \]
  This condition is equivalent to the defining linear forms being linearly dependent. We call a collection of hyperplanes a circuit if it is minimally dependent. A collection $S$ is called a broken circuit if adding on a hyperplane $H$ greater than all the hyperplanes in $S$, with respect to the ordering imposed above, results in a circuit. The algebra $BC(\A)$ is defined to the the free $\Q$-module generated by $1$ and the set $\set{ e_S }$ where the collections $S$ intersect nontrivially and don't contain any broken circuits. This is known as a ``no broken circuit" or NBC basis. 
  
  We already know that the natural map from $BC(c\A)$ to $B(c\A)$ induces an isomorphism of graded vector spaces in the case when $\A$ is central \cite[2.8]{Co}. To prove this in the affine case, we construct a commutative diagram \cite[3.55]{OT}
  \begin{diagram}
  0 & \rTo & BC(\A) & \rTo & BC(c\A) & \rTo & BC(\A) & \rTo & 0 \\
  & & \dTo & & \dTo & & \dTo \\
  0 & \rTo & B(\A) & \rTo & B(c\A) & \rTo & B(\A) & \rTo & 0 
  \end{diagram}
  with the top row known to be exact. Following the same reasoning as \cite[3.55]{OT}, since the middle vertical map $BC(c\A) \to B(c\A)$ is known to be an isomorphism of vector spaces, then by a diagram chase, $BC(\A) \cong B(\A)$. 
  
  Finally, we conclude that since
    \[ \mathrm{Poin}(B(c\A)) = (1+t^2) \mathrm{Poin}(B(\A)) \]
  and 
    \[ \mathrm{Poin}(B(\Hh)) = \mathrm{Poin}(B(\Hh')) + t^2 \mathrm{Poin}(B(\Hh '')) \]
  where $\Hh$ is a central arrangement, then 
    \[ \mathrm{Poin}(B(\A)) = \mathrm{Poin}(B(\A')) + t^2 \mathrm{Poin}(B(\A '')) . \]
\end{proof}

In the next result we will compute a presentation of the $T$-equivariant cohomology of $M_3(\A)$ and along the way, we will prove that $B(\A) \cong H^*(M_3(\A))$. 

\begin{thm}\label{tpres}
  Consider the map $\psi:\Q[e_1, \ldots, e_n,u] \to H_T ^*(M_3(\mathcal{A}))$ taking $e_i$ to $\omega_{i,3}^*([Z^+]_T)\in H_T^2(M_3(\mathcal{A}))$, and let $\mathcal{I}$ be the kernel. This map is surjective, and $\mathcal{I}$ is generated by the following families of relations:
\[\begin{array}{l l}
      1) &e_i (e_i - u) \text{ for } i\in \set{1,\ldots, n} \\
      \\
      2) &\displaystyle \prod_{i\in S^+} e_i \times \prod_{j\in S^{-}} (e_j - u)\text{ if }\bigcap_{i\in S^+} H_i ^+ \cap \bigcap_{j\in S^-} H_j ^- = \varnothing \\
      \\
      3) &\displaystyle u^{-1} \left( \prod_{i\in S^+} e_i \times \prod_{j\in S^{-}} (e_j - u) - \prod_{i\in S^+} (e_i-u) \times \prod_{j\in S^{-}} e_j \right) \\
      \\
      &\text{if } \displaystyle \bigcap_{i\in S^+} H_i ^+ \cap \bigcap_{j\in S^-} H_j ^- = \varnothing \text{ and } \bigcap _{i\in (S^+ \cup S^-)} H_i \neq \varnothing . \\
      \\
    \end{array}\]
\end{thm}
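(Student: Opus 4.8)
The plan is to prove Theorem \ref{tpres} by first establishing that the map $\psi$ is well-defined (the listed relations hold in $H_T^*(M_3(\mathcal{A}))$), then using equivariant formality together with the recursion of Lemma \ref{recursion} and the surjectivity of Lemma \ref{generators} to conclude that $\psi$ descends to an \emph{isomorphism} from $\Q[e_1,\ldots,e_n,u]/\mathcal{I}$ onto $H_T^*(M_3(\mathcal{A}))$.

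First I would verify the relations geometrically, which is the natural place for the heart of the argument. Family $1)$: the class $e_i = \omega_{i,3}^*([Z^+]_T)$ is pulled back from $H_T^*(\R^3\setminus\{0\}) \cong \Q[x,y]/\ideal{xy}$ via Example \ref{r3example}, where $[Z^+]_T = y$, $[Z^-]_T = x$, and $u = y - x$; since $y(y-u) = y x = 0$ there, we get $e_i(e_i - u) = 0$. Family $2)$: when $\bigcap_{i\in S^+} H_i^+ \cap \bigcap_{j\in S^-} H_j^- = \varnothing$, the corresponding intersection of submanifolds $\omega_{i,3}^{-1}(Z^+)$ and $\omega_{j,3}^{-1}(Z^-)$ is empty in $M_3(\mathcal{A})$, so the product of their classes vanishes — and $\prod_{i\in S^+} e_i \prod_{j\in S^-}(e_j - u) = \prod_{i\in S^+}[\,\omega_{i,3}^{-1}(Z^+)\,]_T \prod_{j\in S^-}[\,\omega_{j,3}^{-1}(Z^-)\,]_T$ since $e_j - u = y - (y-x) = x = [Z^-]_T$. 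Family $3)$: this is the subtlest; when the forms are linearly dependent and the positive/negative intersection is empty, the two products $\prod_{i\in S^+} e_i \prod_{j\in S^-}(e_j-u)$ and $\prod_{i\in S^+}(e_i - u)\prod_{j\in S^-} e_j$ are both $u$ times the \emph{same} class — one should compare the two oriented intersections of the relevant codimension-$2$ submanifolds inside $M_3(\mathcal{A})$, which differ by reorienting along the common axis, contributing the factor $u = [Z^+]_T - [Z^-]_T$. Dividing by $u$ is legitimate because $H_T^*(M_3(\mathcal{A}))$ is a free (hence torsion-free) $\Q[u]$-module by equivariant formality, so the quotient element is well-defined.

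Having shown $\mathcal{I} \subseteq \ker\psi$, so that $\psi$ factors through a surjection $\overline\psi : \Q[e_1,\ldots,e_n,u]/\mathcal{I} \twoheadrightarrow H_T^*(M_3(\mathcal{A}))$ (surjectivity of the underlying map being Lemma \ref{generators} applied after noting that $H_T^*$ is generated over $\Q[u]$ by lifts of generators of $H^*$, by formality), it remains to prove injectivity. Setting $u = 0$ in the presentation $\Q[e_1,\ldots,e_n,u]/\mathcal{I}$ recovers exactly the algebra $B(\mathcal{A})$ of Lemma \ref{recursion} (family $1)$ becomes $e_i^2$, family $2)$ becomes $\prod_{i\in S^+} e_i \prod_{j\in S^-} e_j$, family $3)$ becomes Cordovil's alternating-sum relation after expanding $u^{-1}(\cdots)$ and reducing mod $u$). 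I would argue that $\Q[e_1,\ldots,e_n,u]/\mathcal{I}$ is a flat $\Q[u]$-module whose fiber at $u=0$ is $B(\mathcal{A})$: the flatness/Rees-algebra structure here can be bootstrapped from the central case — where \cite{dLS} gives the presentation of $H^*(M_3)$ directly — via the coning short exact sequences $0\to B(\mathcal{A})\to B(c\mathcal{A})\to B(\mathcal{A})\to 0$ and the analogous sequence for the equivariant presentation, combined with the recursion. Then a graded-dimension count closes the argument: by Lemma \ref{recursion} and Corollary \ref{recursionM3}, $B(\mathcal{A})$ and $H^*(M_3(\mathcal{A}))$ have the same Poincaré polynomial; equivariant formality gives $H_T^*(M_3(\mathcal{A})) \cong H^*(M_3(\mathcal{A})) \otimes \Q[u]$ as graded $\Q[u]$-modules; and the flat family $\Q[e_1,\ldots,e_n,u]/\mathcal{I}$ has fiber dimension matching $B(\mathcal{A})$ in each degree, hence matching $H^*(M_3(\mathcal{A}))$; so the surjection $\overline\psi$ is a degreewise-dimension-preserving surjection of graded vector spaces and therefore an isomorphism.

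The main obstacle I anticipate is two-fold and concentrated in family $3)$: first, carefully checking the \emph{sign} in the geometric computation — that the difference of the two products of submanifold classes is $+u$ times (not $-u$ times, and not some other combination of) the common restriction class, which requires a careful local orientation analysis near the common axis $\bigcap_{i\in S^+\cup S^-} H_i$ in $V^3$; and second, rigorously justifying that dividing by $u$ lands inside the image, i.e. that the bracketed expression is genuinely divisible by $u$ in $H_T^*(M_3(\mathcal{A}))$ — this follows from $u$-torsion-freeness plus the fact that the expression vanishes modulo $u$ (since mod $u$ it becomes a relation that holds in $H^*(M_3(\mathcal{A}))$, using \cite{Co}), but one must state this cleanly. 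A secondary technical point is establishing flatness of the presented algebra over $\Q[u]$ without circularity; I would handle this by the coning induction, reducing to the central case treated in \cite{dLS}.
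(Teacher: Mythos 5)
Your skeleton (verify the three families, get surjectivity from Lemma \ref{generators} plus equivariant formality, identify the $u=0$ specialization with $B(\A)$ and compare Poincar\'e polynomials) is the paper's, but the two steps you yourself flag as the main obstacles are genuine gaps in the proposal as written, and in both cases the intended difficulty can be sidestepped. For family $3)$, you propose a local orientation analysis near $\bigcap_{i\in S^+\cup S^-}H_i$ to show the two products are each $u$ times a common class, and you admit the sign is not pinned down. No such analysis is needed: the hypothesis $\bigcap_{i\in S}H_i\neq\varnothing$ lets you reflect $V$ through a point of that intersection, which exchanges $H_i^+$ and $H_i^-$ for every $i\in S$, so $\bigcap_{S^+}H_i^+\cap\bigcap_{S^-}H_j^-=\varnothing$ forces $\bigcap_{S^+}H_i^-\cap\bigcap_{S^-}H_j^+=\varnothing$ as well. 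Hence \emph{both} products $\prod_{S^+}e_i\prod_{S^-}(e_j-u)$ and $\prod_{S^+}(e_i-u)\prod_{S^-}e_j$ are instances of family $2)$ and vanish individually in $H_T^*(M_3(\A))$. Their difference is then zero on the nose, and $u$-torsion-freeness (from formality) kills the polynomial $u^{-1}(\cdots)$. Note that your stated justification for the division --- that the bracketed expression ``vanishes modulo $u$'' --- is logically insufficient: that only says $\psi(\ast)\in uH_T^*(M_3(\A))$, which is automatic for a polynomial multiple of $u$; to invoke torsion-freeness you need $\psi(\ast)=0$ in $H_T^*(M_3(\A))$ itself, which is exactly what the reflection argument supplies.

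For injectivity, your plan rests on proving that $\Q[e_1,\ldots,e_n,u]/\mathcal{I}$ is flat over $\Q[u]$ by a coning induction on the equivariant presentation; you flag the circularity risk, and indeed it is unclear how to exclude hidden $u$-torsion in a quotient defined only by generators of an ideal without already knowing the answer. This detour is unnecessary. Once $B(\A)\cong H^*(M_3(\A))$ is established (surjection from Lemma \ref{generators} together with equality of Poincar\'e polynomials from Lemma \ref{recursion} and Corollary \ref{recursionM3}), a minimal-degree descent closes the argument: if $a\in\ker\psi\setminus\mathcal{I}$ is homogeneous of minimal degree, its image under $u\mapsto 0$ lies in $\mathcal{I}_0=\ker\bigl(\Q[e_1,\ldots,e_n]\to H^*(M_3(\A))\bigr)$, so $a-b=cu$ for some $b\in\mathcal{I}$; torsion-freeness of $H_T^*(M_3(\A))$ gives $c\in\ker\psi$, while $a\notin\mathcal{I}$ gives $c\notin\mathcal{I}$, contradicting minimality. (Equivalently, a graded Nakayama argument shows that a $\Q[u]$-algebra surjection onto a free module which is an isomorphism modulo $u$ is an isomorphism.) Flatness of the presented algebra then falls out as a corollary rather than serving as an input, so the central-case bootstrap through \cite{dLS} is not needed for this step.
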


\begin{rem}
  Note that the expression inside the parentheses in family (3) is a multiple of $u$, so the whole thing is a polynomial.
\end{rem}

\begin{proof} 
  By Proposition \ref{evendegree}, $H^*(M_3(\mathcal{A}))$ is concentrated in even degrees, so $M_3(\mathcal{A})$ is formal. By formality, any lift of any generating set for $H^*(M_3(\mathcal{A}))$ is a generating set for $H_T^*(M_3(\mathcal{A}))$ over $H_T^*(pt)$. Hence by Lemma \ref{generators}, $H_T^*(M_3(\mathcal{A}))$ is generated by $e_1, \ldots, e_n$ and $u$. 

  The class $\omega_{i,3} ^*([Z^+]_T)$ is represented by the oriented $T$-submanifold 
  \[ Y_i ^+ = \omega_{i,3} ^{-1} (Z^+). \]
  Let $u\in H_T ^2(M_3 (\mathcal{A}))$ be the image of the generator of $H_T ^2(pt)$. By functoriality, we have $u= \omega_{i,3} ^*(u)$ for all $i$. 
 
  Recall from Example \ref{r3example} that $[Z^-]_T = y-u \in H_T ^*(\R ^3 \setminus 0)$. Hence, 
  \[ e_i - u = \omega_{i,3} ^* (y-u) \in H_T ^* (M_3 (\mathcal{A})) \]
  is represented by the oriented $T$-submanifold 
  \[ Y_i ^- = \omega _{i,3} ^{-1} (Z^-). \]

  The next step is to check that the three families belong to $\mathcal{I}$. For all $i$, $\psi (e_i (e_i - u))=0$ because $\psi(e_i (e_i -u)) = \omega_{i,3} ^*(y(y-u))=\omega_{i,3} ^*(0) = 0$. 
  
  For the second family of relations, we need to show that if
  \[ \bigcap_{i\in S^+} H_i ^+ \cap \bigcap_{j\in S^-} H_j ^- = \varnothing \]
  then 
  \[ \bigcap_{i\in S^+} Y_i ^+ \cap \bigcap_{j\in S^-} Y_j ^- = \varnothing . \] 
  Let $\pi_1: V^3 \longrightarrow V $ be the projection onto the first coordinates. That is, it should restrict the 3-arrangement complement to the real arrangement complement.
  If $p \in \bigcap_{i\in S^+} Y_i ^+ \cap \bigcap_{j\in S^-} Y_j ^-$, then $\omega_{i,3} (p) \in Z^+$ for all $i\in S^+$ and $\omega_{j,3} (p) \in Z^-$ for all $j\in S^-$. Then 
  \[ \pi_1(p) \in \bigcap_{i\in S^+} H_i ^+ \cap \bigcap_{j\in S^-} H_j ^- \]
  and hence the intersection is not empty.
  
  For the third family, since $H_T ^*(M_3(\mathcal{A}))$ is free over $H_T^*(pt)\cong \Q[u]$, we only need to show that 
  \begin{equation*}\tag{\textasteriskcentered}  \left( \prod_{i\in S^+} e_i \times \prod_{j\in S^{-}} (e_j - u) - \prod_{i\in S^+} (e_i-u) \times \prod_{j\in S^{-}} e_j \right) =0 \end{equation*}
  if $\bigcap_{i\in S^+} H_i ^+ \cap \bigcap_{j\in S^-} H_j ^- = \varnothing$ and $\bigcap _{i\in S} H_i \neq \varnothing$. Choose a point $p\in \bigcap _{i\in S} H_i$. The involution of $V$ given by reflection through $p$ takes $H_i ^+$ to $H_i ^-$ and vice versa for all $i\in S$, hence we also have 
  \[ \bigcap_{i\in S^+} H_i ^- \cap \bigcap_{j\in S^-} H_j ^+ = \varnothing . \]
  Taking the difference of the corresponding relations from family 2) yields the relation (\textasteriskcentered). We have now shown that there is a surjective map \[ \C[e_1, \ldots, e_n, u]/\mathcal{I} \to H_T^*(M_3(\mathcal{A})).\] 
  
  Setting $u$ to $0$ in $\mathcal I$ gives us the ideal $\mathcal{I}_0$ from Lemma \ref{recursion}. Thus $\mathcal{I}_0$ is contained in the kernel of the map $\Q[e_1, \ldots, e_n] \to H^*(M_3(\mathcal{A}))$. Therefore, there is a map, which is surjective by Lemma \ref{generators}, from the algebra $B(\A)$ to $H^*(M_3(\A))$. By Corollary \ref{recursionM3} and Lemma \ref{recursion}, these algebras have the same Poincar\'e polynomials and must be isomorphic.  Hence we have the following commutative diagram with exact rows and surjective columns.
  
  \begin{diagram}
     &  & \mathcal{I} & \rTo & \Q[e_1,\ldots, e_n, u] & \rTo{\psi} & H_T ^*(M_3(\mathcal{A})) & \rTo & 0 \\
       &          & \dTo{\phi}{u=0}            &          &   \dTo{\phi}{u=0}                                  		& 	   & \dTo{u=0} \\
    0 & \rTo & \mathcal{I}_0 & \rTo & \Q[e_1, \ldots, e_n] & \rTo & H^*(M_3(\mathcal{A}))  & \rTo & 0 
  \end{diagram}
  
  We would like to prove that $\mathcal{I} = \ker (\psi)$. Assume not, and let $a\in \ker{\psi} \setminus \mathcal{I}$ be a homogeneous class of minimal degree. Since $\psi(a)=0$, $\phi(a) \in \mathcal{I}_0$. Hence, there is a $b\in \mathcal{I}$ so that $\phi(a-b)=0$. Hence, $a-b=cu$ for some $c\in  \Q[e_1,\ldots, e_n, u]$. By formality, since $cu\in \ker(\psi)$, then $c\in \ker(\psi)$. Since $a-b=cu$, then $cu\notin \mathcal{I}$, and hence $c\notin \mathcal{I}$. This contradicts the assumption that $a$ is of minimal degree in the set $\ker(\psi) \setminus \mathcal{I}$. 
\end{proof}

As a byproduct of this proof, we obtain a presentation for the ordinary cohomology of $M_3(\A)$.

\begin{cor}\label{ordpres}
  Let $B(\A)$ be the algebra defined in Lemma \ref{recursion}. Then
  \[ B(\A) \cong H^*(M_3(\A)) . \]
\end{cor}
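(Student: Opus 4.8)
The plan is to extract this from the proof of Theorem~\ref{tpres}, where essentially all of it is already carried out; I would simply organize its non-equivariant part. First I would invoke Lemma~\ref{generators} to get a surjection $\psi\colon\Q[e_1,\dots,e_n]\to H^*(M_3(\A))$ sending $e_i$ to $\omega_{i,3}^*([Z^+])$, and then check that $\mathcal{I}_0\subseteq\ker\psi$, so that $\psi$ descends to a surjection $B(\A)\twoheadrightarrow H^*(M_3(\A))$. The cleanest route to the inclusion $\mathcal{I}_0\subseteq\ker\psi$ is to use equivariant formality of $M_3(\A)$, which gives $H^*(M_3(\A))\cong H_T^*(M_3(\A))/\langle u\rangle$ (Proposition~\ref{specialize}), combine it with the presentation $H_T^*(M_3(\A))\cong\Q[e_1,\dots,e_n,u]/\mathcal I$ of Theorem~\ref{tpres}, and observe that setting $u=0$ sends the three families generating $\mathcal I$ to a generating set of $\mathcal{I}_0$: family~(1) of $\mathcal I$ becomes $e_i^2$, family~(2) becomes $\prod_{S^+\cup S^-}e_i$, and family~(3) becomes the alternating-sum relation (up to an overall sign). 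Alternatively one can verify $\mathcal{I}_0\subseteq\ker\psi$ by hand: family~(1) because $[Z^+]^2=0$ in $H^*(\R^3\setminus\{0\})$ (Example~\ref{r3example}), and families~(2) and~(3) by the disjointness-of-submanifolds and reflection arguments already used in the proof of Theorem~\ref{tpres}.

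With the surjection $B(\A)\twoheadrightarrow H^*(M_3(\A))$ in hand, I would finish with a graded dimension count. By Corollary~\ref{recursionM3} the Poincar\'e polynomial of $H^*(M_3(\A))$ satisfies $\mathrm{Poin}(M_3(\A))=\mathrm{Poin}(M_3(\A'))+t^2\,\mathrm{Poin}(M_3(\A''))$, and by Lemma~\ref{recursion} the Poincar\'e polynomial of $B(\A)$ satisfies the same recursion. The two polynomials agree on arrangements with at most one hyperplane (both rings being $\Q$ or $\Q[e]/\langle e^2\rangle$), so by induction on $\abs{\A}$ they agree for every real hyperplane arrangement $\A$. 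A degree-preserving surjection between finite-dimensional graded vector spaces with the same Hilbert series is an isomorphism, hence $B(\A)\cong H^*(M_3(\A))$ as graded rings.

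The one point requiring genuine care is the assertion that the $u=0$ specialization of $\mathcal I$ generates exactly $\mathcal{I}_0$: the $u=0$ image of family~(2) is $\prod_{S^+\cup S^-}e_i$ under the hypothesis that an open sign region $\bigcap_{S^+}H_i^+\cap\bigcap_{S^-}H_j^-$ is empty, whereas family~(2) of $\mathcal{I}_0$ uses the different hypothesis $\bigcap_{i\in S}H_i=\varnothing$. To reconcile these I would argue: (i) for a circuit $C$, multiplying the family~(3) relation of $C$ by one of the variables $e_{H_k}$ with $k\in C$ and using the squaring relations $e_{H_j}^2$ shows $\prod_{i\in C}e_{H_i}\in\mathcal{I}_0$, hence $\prod_{i\in S}e_{H_i}\in\mathcal{I}_0$ whenever $S$ is dependent; and (ii) an affine Farkas-type argument shows that $\bigcap_{i\in S}H_i=\varnothing$ forces a nonzero affine dependence among the forms $\omega_i$, $i\in S$, whose support has an empty sign region, which places the family~(2) generators of $\mathcal{I}_0$ into the $u=0$ image of $\mathcal I$. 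I expect this bookkeeping to be the main (and only real) obstacle; everything else is assembly of Lemma~\ref{generators}, Lemma~\ref{recursion}, Corollary~\ref{recursionM3}, and Theorem~\ref{tpres}.
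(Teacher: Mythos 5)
Your proposal is correct and follows the paper's own route: the corollary is obtained there as a byproduct of the proof of Theorem~\ref{tpres}, precisely by combining the surjection of Lemma~\ref{generators}, the containment $\mathcal{I}_0\subseteq\ker\psi$ obtained by setting $u=0$ in the already-verified relations of $\mathcal I$, and the matching Poincar\'e recursions of Corollary~\ref{recursionM3} and Lemma~\ref{recursion}. Two small remarks: to avoid circularity you should invoke only the containment $\mathcal I\subseteq\ker\psi$ (established geometrically in the first half of that proof, before the presentation is complete) rather than the full statement of Theorem~\ref{tpres}, exactly as your ``verify by hand'' alternative does; and your closing paragraph reconciling the $u=0$ image of family~(2) of $\mathcal I$ with family~(2) of $\mathcal{I}_0$ (via the circuit-times-$e_{H_k}$ trick and the affine Farkas argument) fills in a step the paper asserts without comment, so it is a worthwhile addition rather than an obstacle.
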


\begin{rem}\label{ordpresrem}
If $\mathcal{A}$ is central, then families (1) and (3) in Lemma \ref{recursion} are sufficient to generate $\mathcal{I}_0$. This was proven for central arrangements in \cite[5.5]{dLS}\footnote{Family 1) was omitted in error in \cite[5.5]{dLS}}. Note the similarity to the presentation of $H^*(M_2(\mathcal{A}))$, which appears in \cite[2 \& 5.2]{OS} for central arrangements and in \cite[3.45 \& 5.90]{OT} for affine arrangements.
\end{rem}

\begin{rem}\label{tpresrem}
By Proposition \ref{specialize}, if we set $u=1$ we get a presentation for $H^0(M_1(\mathcal{A}))$. This is precisely the presentation given in Proposition \ref{VGpres}, and the degree filtration is precisely the filtration defined using Heaviside functions. In particular, this demonstrates that the filtration introduced in \cite{VG} is very natural from the point of view of equivariant cohomology. Combining this observation with Proposition \ref{grec}, we obtain our main result.
\end{rem}

\begin{cor}\label{assocgr}
  The associated graded algebra of $H^*(M_1(\mathcal{A}))$ with respect to the VG filtration is isomorphic to $H^*(M_3(\mathcal{A}))$.
\end{cor}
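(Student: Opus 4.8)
The plan is to assemble the corollary from ingredients already in hand: Proposition~\ref{grec}, which identifies $\gr H^0(M_1(\mathcal{A}))$ with $H^*(M_3(\mathcal{A}))$ for the filtration produced by Corollary~\ref{assocgrad}, together with Theorem~\ref{tpres}, whose presentation of $H_T^*(M_3(\mathcal{A}))$ specializes at $u=1$ to the Varchenko--Gelfand presentation of Proposition~\ref{VGpres}. The only genuine content is to check that the ``equivariant'' filtration supplied by Corollary~\ref{assocgrad} coincides with the VG filtration by polynomial degree in the Heaviside functions $x_i$; once that is done the statement is immediate, since $M_1(\mathcal{A})$ is a disjoint union of chambers and hence $H^*(M_1(\mathcal{A})) = H^0(M_1(\mathcal{A}))$.

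First I would recall precisely what filtration Corollary~\ref{assocgrad} puts on $H^*(F)$, where $F = M_3(\mathcal{A})^T = M_1(\mathcal{A})$: viewing $H_T^*(M_3(\mathcal{A}))$ as the Rees algebra, the $k$-th filtered piece $P^k \subseteq H^0(M_1(\mathcal{A}))$ is the image of $\bigoplus_{j\le k} H_T^{2j}(M_3(\mathcal{A}))$ under the specialization map $H_T^*(M_3(\mathcal{A})) \to H_T^*(M_3(\mathcal{A}))/\ideal{u-1} \cong H^0(M_1(\mathcal{A}))$ of Proposition~\ref{specialize}. Next I would use the surjection $\psi$ of Theorem~\ref{tpres}: every class in $H_T^{2j}$ is a polynomial of weighted degree $j$ in $e_1,\dots,e_n,u$ (each $e_i$ and $u$ in weight one), so after setting $u=1$ the class $e_i$ maps to the Heaviside function $x_i$ and $P^k$ becomes exactly the set of functions representable by polynomials of degree $\le k$ in the $x_i$ — that is the VG filtration. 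At $u=1$ the relations (1)--(3) of Theorem~\ref{tpres} become the relations (1)--(3) of Proposition~\ref{VGpres}, so the filtered ring itself, and not merely its associated graded, is the VG ring; no separate comparison of relations is needed.

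With the two filtrations identified, Proposition~\ref{grec} yields $\gr_{\mathrm{VG}} H^0(M_1(\mathcal{A})) \cong H^*(M_3(\mathcal{A}))$ as graded algebras, which is the assertion (with degrees halved, as in Theorem~\ref{mainresult}(a), since $H^*(M_3(\mathcal{A}))$ is concentrated in even degrees and $P^k/P^{k-1} \cong H^{2k}(M_3(\mathcal{A}))$).

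The step I expect to require the most care is the bookkeeping in the second paragraph: matching grading conventions (equivariant degree $2k$ versus filtration degree $k$) and verifying that the image of $\bigoplus_{j\le k} H_T^{2j}$ is spanned by monomials of degree \emph{at most} $k$ in the $e_i$ rather than exactly $k$. This uses that $u$ itself lies in $H_T^2(M_3(\mathcal{A}))$, so that lower-degree monomials in the $x_i$ arise by multiplying a monomial in the $e_i$ by an appropriate power of $u$ before specializing $u=1$; the relation $e_i(e_i-u)$ then keeps the $e_i$ squarefree. Everything else is a direct appeal to Proposition~\ref{grec}, Theorem~\ref{tpres}, and Proposition~\ref{specialize}.
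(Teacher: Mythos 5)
Your argument is correct and follows essentially the same route as the paper, which proves the corollary by combining Proposition~\ref{grec} with the observation (Remark~\ref{tpresrem}) that setting $u=1$ in the presentation of Theorem~\ref{tpres} recovers the Varchenko--Gelfand presentation and that the Rees-algebra filtration of Corollary~\ref{assocgrad} then coincides with the filtration by polynomial degree in the Heaviside functions. Your extra bookkeeping in the second and fourth paragraphs simply makes explicit the identification of filtrations that the paper asserts in one sentence.
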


\section{Oriented Matroids}

As a central hyperplane arrangement gives rise to a matroid, one can ask if these results generalize to matroids. More precisely, a real central hyperplane arrangement with choices of linear forms determines an oriented matroid.

\begin{defn}
  Let $E$ be a set. A signed subset $X$ of $E$ is a set $\underline{X}\subset E$ together with a partition $(X^+,X^-)$ of $\underline{X}$ into two distinguished subsets. The set $\underline{X} = X^+ \cup X^-$ is the support of $X$. The opposite of a signed set $X$, denoted by $-X$, is the signed set with $(-X)^+ = X^-$ and $(-X)^-=X^+$.
\end{defn}

Alternatively, one could consider a signed subset of $E$ to be a map of sets $\Phi_X: E \to \set{-1,0,1}$ with $X^+=\Phi_X^{-1}(1)$, and $X^-=\Phi_X^{-1}(-1)$. 

\begin{defn}
  A collection $\mathcal{C}$ of signed subsets of a set $E$ is the set of signed circuits of a loop-free oriented matroid on $E$ if and only if it satisfies the following axioms:
  \begin{enumerate}
    \item for all $X \in \mathcal{C}$, $\abs{\underline{X}}>1$ 
    \item $\mathcal{C} = -\mathcal{C}$
    \item for all $X,Y\in \mathcal{C}$, if $\underline{X} \subseteq \underline{Y}$, then $X=Y$ or $X=-Y$
    \item for all $X,Y\in \mathcal{C}$, $X\neq -Y$, and $e\in X^+ \cap Y^-$ there is a $Z\in \mathcal{C}$ such that 
    \[ Z^+ \subseteq (X^+ \cup Y^+)\setminus \set{e} \text{ and } Z^- \subseteq (X^- \cup Y^-)\setminus \set{e}. \]
  \end{enumerate}
\end{defn}

\begin{example}
  Let $\mathcal{A}= \set{H_1, \ldots, H_n}$ be a central hyperplane arrangement determined by linear forms $\set{\omega_1, \ldots, \omega_n}$, and let $E=\set{1,\ldots, n}$. Let 
  \[ H_i ^+ = \set{ v \mid \omega_i (v) > 0} \]
  and 
  \[ H_i ^- = \set{ v \mid \omega_i (v) < 0}. \]
  A signed circuit is a minimal signed subset $X$ such that
  \[ \left( \bigcap_{i \in X^+} H_i ^+ \right) \bigcap \left( \bigcap_{j\in X^-} H_j ^- \right) = \varnothing . \]
  We denote by $\mathcal{C}(\mathcal{A})$ the collection of signed circuits of the oriented matroid determined by a hyperplane arrangement $\mathcal{A}$.
\end{example}

\begin{defn} \cite{GR}
Let $(E,\mathcal{C})$ be an oriented matroid. The algebra $P(\mathcal{C})$ is defined to be the quotient of the polynomial ring $\Q[x_a]_{a\in E}$ by the ideal generated by the following elements:
\begin{enumerate}
  \item $x_a^2 - x_a$ for all $a\in E$.
  \item $\left(\prod _{a\in X^+} x_a \right) \left(\prod_{a\in X^-} (1-x_a) \right)$ for all $X\in \mathcal{C}$.
  \item $\left(\prod _{a\in X^+} x_a \right) \left(\prod_{a\in X^-} (1-x_a) \right) - \left(\prod _{a\in X^-} x_a \right) \left(\prod_{a\in X^+} (1-x_a) \right)$\\ for all $X\in \mathcal{C}$.
\end{enumerate} 
\end{defn}

\begin{rem}
  We see that in fact only families (1) and (2) are required to generate $P(\mathcal{C})$ from axiom (2) of the definition of oriented matroid. However, as in the case of Proposition \ref{VGpres}, we may replace family (2) with family (3).
\end{rem}

\begin{example}
By Proposition \ref{VGpres},
\[ P(\mathcal{C}(\mathcal{A})) \cong H^0(M_1(\mathcal{A})). \]
Filtering $P(\mathcal{C})$ by degree generalizes the VG filtration.
\end{example}

Analogous to the above construction, Cordovil defines a commutative algebra associated to an oriented matroid that is isomorphic to $H^*(M_3(\mathcal{A}))$ in the case that the oriented matroid is represented by a hyperplane arrangement.

\begin{defn}
  \cite{Co} Consider the map
  \[ \widetilde{\partial} : \mathcal{C} \to \Z [E]  \hspace{.3in} X \mapsto \sum _{a\in \underline{X}} \mathbf{\Phi_X} (a) \prod_{\stackrel{b\in \underline{X}}{b\neq a}} x_b \]
  where $\underline{X} = \set {i_1, \ldots, i_m}, i_1 < \cdots < i_m$, and $X\in \mathcal{C}$ is the signed circuit supported by $\underline{X}$ so that $\Phi_X (i_1) = 1$. The algebra $\mathbb{A}(\mathcal{C})$ is defined to be the quotient of the polynomial ring $\Q[x_a]_{a\in E}$ by the ideal generated by the following elements:
  \begin{enumerate}
    \item $x_a ^2 = 0$
    \item $\widetilde{\partial} (X) = 0 $ for all $X\in \mathcal{C}$.
  \end{enumerate}
\end{defn}

\begin{example}
  By Corollary \ref{ordpres} and Remark \ref{ordpresrem},
  \[ \mathbb{A}(\mathcal{C}(\mathcal{A})) \cong H^*(M_3(\mathcal{A})). \]
\end{example}

\begin{thm}\label{matassocgr}
  With respect to the filtration of $P(\mathcal{C})$ by polynomial degree in the generators, the associated graded algebra of $P(\mathcal{C})$ is isomorphic to $\mathbb{A}(\mathcal{C})$.
\end{thm}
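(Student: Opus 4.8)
The plan is to rerun the argument of Section~4, with the topological input (equivariant formality of $M_3(\A)$) replaced by a purely combinatorial flatness statement. Introduce the graded $\Q[u]$-algebra
\[ \widetilde{\mathbb{A}}(\mathcal{C}) \ :=\ \Q[x_a : a\in E][u]\big/ \widetilde{\mathcal{I}}, \]
where every $x_a$ and $u$ have degree $2$ and $\widetilde{\mathcal{I}}$ is the $u$-deformation of the defining ideal of $\mathbb{A}(\mathcal{C})$, i.e. the ideal generated by
\[ x_a(x_a-u),\qquad \prod_{a\in X^+}x_a\prod_{a\in X^-}(x_a-u),\qquad u^{-1}\!\left(\prod_{a\in X^+}x_a\prod_{a\in X^-}(x_a-u)-\prod_{a\in X^-}x_a\prod_{a\in X^+}(x_a-u)\right) \]
for $a\in E$ and $X\in\mathcal{C}$ (the last expression is a polynomial, since modulo $u$ both products equal $\prod_{a\in\underline{X}}x_a$). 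There is no analogue of the ``$\bigcap H_i\neq\varnothing$'' hypothesis of Theorem~\ref{tpres}: the support of a circuit is always minimally dependent. A short comparison of presentations shows that setting $u=0$ cuts $\widetilde{\mathbb{A}}(\mathcal{C})$ down by $x_a^2$, $\prod_{a\in\underline X}x_a$, and (from the first-order term of the third family, up to sign) $\widetilde{\partial}(X)$; since $\prod_{a\in\underline X}x_a$ equals $x_{i_1}\widetilde{\partial}(X)$ modulo the squares, it is redundant, so $\widetilde{\mathbb{A}}(\mathcal{C})/\ideal{u}\cong\mathbb{A}(\mathcal{C})$. Setting $u=1$ yields $x_a^2-x_a$ together with the family-$(2)$ relations $\prod_{a\in X^+}x_a\prod_{a\in X^-}(1-x_a)$ (for all $X\in\mathcal{C}$, using $-X\in\mathcal{C}$), which by the remark after Proposition~\ref{VGpres} present $P(\mathcal{C})$; so $\widetilde{\mathbb{A}}(\mathcal{C})/\ideal{u-1}\cong P(\mathcal{C})$.

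The crux is to show that $\widetilde{\mathbb{A}}(\mathcal{C})$ is \emph{free} as a $\Q[u]$-module; this is exactly where equivariant formality entered in the representable case, and must now be obtained combinatorially. I would prove it by straightening. Cordovil~\cite{Co} shows, via a deletion--restriction (coning) recursion, that $\mathbb{A}(\mathcal{C})$ has a ``no broken circuit'' $\Q$-basis; the same rewriting rules lift to $\widetilde{\mathbb{A}}(\mathcal{C})$ — a repeated factor is eliminated using $x_a^2=ux_a$ at the cost of one power of $u$, and a monomial containing a broken circuit is reduced using the corresponding deformed family-$(3)$ relation. Hence the $N$ NBC monomials span $\widetilde{\mathbb{A}}(\mathcal{C})$ over $\Q[u]$, where $N$ is the number of no-broken-circuit sets. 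On the other hand $\widetilde{\mathbb{A}}(\mathcal{C})/\ideal{u}\cong\mathbb{A}(\mathcal{C})$ has $\Q$-dimension exactly $N$, so the images of these monomials are $\Q$-independent mod $u$; clearing the largest common power of $u$ from any putative $\Q[u]$-relation and reducing mod $u$ shows they are $\Q[u]$-independent, hence a free basis. Equivalently, one can mirror the short exact sequence in the proof of Lemma~\ref{recursion}, building $0\to\widetilde{\mathbb{A}}(\mathcal{C})\to\widetilde{\mathbb{A}}(c\mathcal{C})\to\widetilde{\mathbb{A}}(\mathcal{C})\to 0$ of $\Q[u]$-modules and inducting to obtain freeness and the NBC basis at once.

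Granting freeness, the theorem is formal: $\widetilde{\mathbb{A}}(\mathcal{C})$ is a graded $\Q[u]$-algebra, free over $\Q[u]$, with $\widetilde{\mathbb{A}}(\mathcal{C})/\ideal{u}\cong\mathbb{A}(\mathcal{C})$ and $\widetilde{\mathbb{A}}(\mathcal{C})/\ideal{u-1}\cong P(\mathcal{C})$, so it is the Rees algebra of the degree-filtered ring $P(\mathcal{C})$, and the argument of Corollary~\ref{assocgrad} (with $\widetilde{\mathbb{A}}(\mathcal{C})$ in the role of $H_T^*(X)$) gives $\gr P(\mathcal{C})\cong\mathbb{A}(\mathcal{C})$.

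I expect the straightening step to be the main obstacle: one must check that the deformed family-$(3)$ relations, together with $x_a^2=ux_a$, genuinely suffice to reduce an arbitrary monomial to NBC monomials over $\Q[u]$ — equivalently, that Cordovil's deletion--restriction recursion lifts to the $\Q[u]$-level without new $u$-torsion. As a cross-check (and an alternative route that bypasses $\widetilde{\mathbb{A}}(\mathcal{C})$), note that the initial forms, with respect to the degree filtration, of the defining relations of $P(\mathcal{C})$ include $x_a^2$ (from $x_a^2-x_a$) and $\widetilde{\partial}(X)$ (the degree-$(\abs{\underline{X}}-1)$ part of a suitable signed difference of two family-$(2)$ relations for $X$ and $-X$, whose top-degree parts cancel); this produces a surjection $\mathbb{A}(\mathcal{C})\twoheadrightarrow\gr P(\mathcal{C})$, which is an isomorphism once one knows $\dim_\Q P(\mathcal{C})=\dim_\Q\mathbb{A}(\mathcal{C})$ — both equal the number of topes of $\mathcal{C}$ (the former by \cite{GR}, the latter by the NBC basis of \cite{Co} together with the fact that an oriented matroid has as many topes as no-broken-circuit sets).
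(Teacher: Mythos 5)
Your closing ``cross-check'' is, in fact, the paper's entire proof: the ideal of initial forms of the relations of $P(\mathcal{C})$ contains $x_a^2$ and $\widetilde{\partial}(X)$ (the latter extracted from a signed difference of the family-(2) relations for $X$ and $-X$ so that the top-degree parts cancel, exactly as you describe --- your version is if anything more careful about signs than the paper's), giving a surjection $\mathbb{A}(\mathcal{C})\twoheadrightarrow \gr P(\mathcal{C})$; equality of dimensions, both being the number of topes, equivalently of NBC sets by \cite[2.8]{Co} and \cite{GR}, forces this to be an isomorphism. That part of your proposal is complete and coincides with the paper's argument.

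Your primary route, via the combinatorial Rees algebra $\widetilde{\mathbb{A}}(\mathcal{C})$, is genuinely different and more structural: it would exhibit $\mathbb{A}(\mathcal{C})$ and $P(\mathcal{C})$ as the special and generic fibers of a flat family over $\Q[u]$, mirroring Theorem \ref{tpres} combinatorially and recovering the equivariant presentation when $\mathcal{C}=\mathcal{C}(\A)$. The computations of the two fibers are correct, but the flatness step has a gap exactly where you suspect. Spanning of $\widetilde{\mathbb{A}}(\mathcal{C})$ over $\Q[u]$ by the NBC monomials comes for free from graded Nakayama ($u$ has positive degree and the images span $\widetilde{\mathbb{A}}(\mathcal{C})/\ideal{u}\cong\mathbb{A}(\mathcal{C})$); the real issue is linear independence, i.e.\ absence of $u$-torsion. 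Your argument --- factor the largest common power of $u$ out of a putative relation and reduce mod $u$ --- is circular: from $u^k\xi=0$ one cannot conclude $\xi=0$ without already knowing that $\widetilde{\mathbb{A}}(\mathcal{C})$ is $u$-torsion-free, which is what is being proved. The honest way to close this is a dimension count comparing $\dim_\Q\widetilde{\mathbb{A}}(\mathcal{C})/\ideal{u-1}=\dim_\Q P(\mathcal{C})$ with the number of NBC monomials --- but at that point the deformation is doing no work that the direct initial-form argument of your last paragraph does not already do. So the Rees-algebra picture is good motivation (and would be worth developing in its own right, e.g.\ via a lift of Cordovil's deletion--restriction sequence to $\Q[u]$), but as a proof of the stated theorem you should let the final paragraph carry the weight.
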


\begin{proof}
Given $X\in \mathcal{C}$, family (3) of $P(\mathcal{C})$ becomes
  \[ \left(\prod _{a\in X^+} x_a \right) \left(\prod_{a\in X^-} (1-x_a) \right)- \left(\prod _{a\in X^-} x_a \right) \left(\prod_{a\in X^+} (1-x_a) \right)  \]
  \[ = \widetilde{\partial}(X) + \text{ lower degree terms.} \]
This tells us that there is a natural surjection of rings $\mathbb{A}(\mathcal{C}) \to \gr P(\mathcal{C})$. By \cite[2.8]{Co}, the dimensions of the two algebras are equal. Hence this map is an isomorphism.
\end{proof}

  Note that if $\mathcal{A}$ of a central hyperplane arrangement and $\mathcal{C} = \mathcal{C}(\mathcal{A})$, this agrees with our earlier result, Corollary \ref{assocgr}. There are several advantages of the approach taken in Section 3. Proposition \ref{grec} shows \emph{a priori} that the cohomology ring of $M_1(\mathcal{A}) = M_3 (\mathcal{A}) ^T$ admits a filtration whose associated graded algebra is isomorphic to the cohomology ring of $M_3(\mathcal{A})$ without having to work directly with any presentations. Furthermore, Remark \ref{tpresrem} motivates the appearance of Heaviside functions in the work of Varchenko and Gelfand. 
  
  \begin{rem}
  In a future paper we will generalize Theorem \ref{matassocgr} to pointed oriented matroids, which are the combinatorial analogues of non-central arrangements.
  \end{rem}
  
\begin{question}
In \cite{GR} and \cite{Pr}, when extending the Orlik-Solomon algebra to the setting of oriented matroids, the Salvetti complex is used to give this a geometric realization. In particular, when the oriented matroid is representable by a hyperplane arrangement, the Salvetti complex is a deformation retract of the complexified hyperplane complement. Is there such a complex that one can construct for the corresponding $3$-arrangement?
\end{question}

\section{Finite Group Actions}

Proposition \ref{grec} has applications for the representation theory of finite groups. Let $\mathcal{A}$ be a real hyperplane arrangement.

\begin{prop}\label{regrep}
  Suppose that a finite group $W$ acts on $\mathcal{A}$. Then $W$ acts on both $M_1(\mathcal{A})$ and $M_3(\mathcal{A})$, and $H^*(M_3(\mathcal{A})) \cong H^*(M_1(\mathcal{A}))$ as $W$-representations.
\end{prop}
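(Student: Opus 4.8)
The plan is to deduce Proposition \ref{regrep} directly from the machinery already assembled, treating the finite group $W$ as a symmetry commuting with the circle action. First I would observe that $W$ acts on $V$ preserving the arrangement $\mathcal{A}$ (permuting the hyperplanes $H_i$), and hence acts diagonally on $V^3 \cong V \otimes (\R \oplus \C)$ by acting on the $V$ factor alone. Since this action touches only the $V$ coordinate while $T = S^1$ acts only on the $\R \oplus \C$ factor, the two actions commute. Because each $\omega_{i,3}$ is built from the linear forms permuted by $W$, the subspace arrangement defining $M_3(\mathcal{A})$ is $W$-stable, so $W$ acts on $M_3(\mathcal{A})$; restricting to the fixed locus gives the $W$-action on $M_1(\mathcal{A}) = M_3(\mathcal{A})^T$. (One should note $M_3$ here denotes the space, and its degree-zero cohomology $H^0(M_1(\mathcal{A}))$ is what carries the VG filtration; I will use $H^*(M_1(\mathcal{A})) = H^0(M_1(\mathcal{A}))$ since $M_1(\mathcal{A})$ is a disjoint union of contractible chambers.)

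Next I would invoke equivariant formality of $M_3(\mathcal{A})$, which is Proposition \ref{evendegree} together with the reasoning of Example \ref{r3example}. With the commuting $W$- and $T$-actions in hand, Proposition \ref{equiv} applies verbatim: $W$ acts on $H_T^*(M_3(\mathcal{A}))$ fixing $u$, and the specialization isomorphisms of Proposition \ref{specialize},
\[ H_T^*(M_3(\mathcal{A}))/\ideal{u} \cong H^*(M_3(\mathcal{A})), \qquad H_T^*(M_3(\mathcal{A}))/\ideal{u-1} \cong H^*(M_1(\mathcal{A})), \]
are $W$-equivariant. Thus $H_T^*(M_3(\mathcal{A}))$ is a flat family of $W$-representations over $\Q[u]$ whose fiber over $u=0$ is $H^*(M_3(\mathcal{A}))$ and whose fiber over $u=1$ is $H^*(M_1(\mathcal{A}))$.

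Finally I would conclude by the semisimplicity argument of Remark \ref{semisimple}. Concretely: the $\Q[u]$-module $H_T^*(M_3(\mathcal{A}))$ is free (by formality), and it is a $W$-equivariant sheaf on $\A^1_{\Q}$; the character of the fiber, as a virtual $W$-representation, is locally constant in a flat family, hence constant, so the fibers at $u=0$ and $u=1$ have equal characters. Since $W$ is finite, $\Q W$ is semisimple and representations are determined by their characters, giving $H^*(M_3(\mathcal{A})) \cong H^*(M_1(\mathcal{A}))$ as $W$-representations. Equivalently, following Remark \ref{semisimple} directly: Corollary \ref{assocgrad} gives $\gr H^*(M_1(\mathcal{A})) \cong H^*(M_3(\mathcal{A}))$ as graded $W$-representations, and since the category of $W$-representations is semisimple the filtered object $H^*(M_1(\mathcal{A}))$ is (non-canonically) isomorphic to its associated graded as a $W$-representation.

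I do not expect a serious obstacle here — the proposition is essentially a corollary of Proposition \ref{equiv} and Remark \ref{semisimple} once the commuting actions are set up. The only point requiring a line of care is verifying that the $W$-action on $V^3$ genuinely commutes with the $T$-action and preserves $M_3(\mathcal{A})$; this is immediate from the tensor decomposition $V^3 \cong V \otimes (\R \oplus \C)$ with $W$ acting on the first tensor factor and $T$ on the second, and from $W$-equivariance of the collection $\{\omega_{i,3}\}$. A secondary subtlety is that the isomorphism of $W$-representations is not canonical (it depends on a choice of splitting of the filtration), which is why the statement is only up to isomorphism rather than a natural identification; this matches the phrasing of Theorem \ref{mainresult}(b).
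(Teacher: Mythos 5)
Your proposal is correct and follows essentially the same route as the paper: verify that the $W$- and $T$-actions on $M_3(\mathcal{A})$ commute via the decomposition $V^3 \cong V \otimes (\R \oplus \C)$, then invoke Proposition \ref{equiv} to get $\gr H^0(M_1(\mathcal{A})) \cong H^*(M_3(\mathcal{A}))$ as $W$-representations, and conclude by the semisimplicity argument of Remark \ref{semisimple}. Your added remarks on flatness over $\Q[u]$ and the non-canonicity of the splitting are accurate elaborations of the same argument rather than a different approach.
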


\begin{proof}
  We will first show that the actions of $W$ and $T$ on $M_3(\mathcal{A})$ commute. The group $T$ acts trivially on the first coordinate of $M_3(\mathcal{A})\subset V^3$, so we only need to show that the actions commute on the last two coordinates. If we treat the last two coordinates as one complex coordinate, we see that $T$ acts by scalar multiplication. Hence, these two actions commute. 
  
  By Proposition \ref{equiv}, $H^*(M_3(\mathcal{A})) \cong \gr H^*(M_1(\mathcal{A}))$ as $W$-representations. The result now follows from Remark \ref{semisimple}. 
\end{proof}

The most interesting class of examples we have are Weyl groups acting on Coxeter arrangements.

\begin{example}
  Let $\mathcal{A}$ be a Coxeter arrangement with Weyl group $W$. In this case $W$ acts simply transitively on the chambers of $\mathcal{A}$, so $H^*(M_3(\mathcal{A})) \cong_W H^0(M_1(\mathcal{A}))$ is the regular representation. 
\end{example}

\begin{rem}
  When $\mathcal{A} = \mathcal{B}_n$ is the braid arrangement and $M_3(\mathcal{A})$ is the configuration space of $n$ points in $\R^3$, this is already known \cite{At}.
\end{rem}

\begin{example}
  Consider the arrangement $\mathcal{B}_4$ whose Weyl group is the symmetric group $S_4$. Computing each graded component as an $S_4$-representation we get 
  \begin{align*}
  & H^0(M_3(\B_4)) \cong \tau \\
  & H^2(M_3(\B_4)) \cong \rho + \Lambda^2(\rho) \\
  & H^4(M_3(\B_4)) \cong \rho + \Lambda^2(\rho) + 2\omega + \sigma \\
  & H^6(M_3(\B_4)) \cong \rho + \Lambda^2(\rho)
  \end{align*}
  where $\tau$ corresponds to the partition $(4)$, $\rho$ corresponds to the partition $(3,1)$, $\Lambda^2(\rho)$ corresponds to the partition $(2,1,1)$, $\omega$ corresponds to the partition $(2,2)$, and $\sigma$ corresponds to the partition $(1,1,1,1)$. Adding up the representations of all of the graded components yields the regular representation of $S_4$.
\end{example}

\begin{example}
  The semi-order arrangement $\mathcal{S}_3$ is defined by the affine linear forms 
  \[ \omega_{ij} = x_i - x_j-1, \hspace{.1in} 1\leq i,j \leq 3 , \hspace{.1in} H_{ij} = \omega_{ij}^{-1} (0) .  \]
  
  \begin{figure}[htp]
  \begin{center}
    \includegraphics[width=2in]{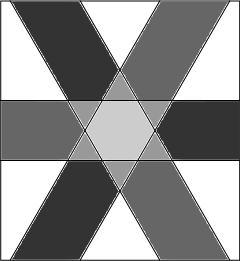}
    \caption{The semi-order arrangement $\mathcal{S}_3$ with shaded orbits.}
  \end{center}
  \end{figure}
  
  Looking at the action of the symmetric group $S_3$ on $H^0(M_1(\mathcal{S}_3))$ we get 
  \[H^0(M_1(\mathcal{S}_3)) \cong _{S_3} 5\tau + 2\sigma + 6\rho . \]
  where $\tau$ corresponds to the partition $(3)$, $\rho$ corresponds to the partition $(2,1)$, and $\sigma$ corresponds to the partition $(1,1,1)$. 
  
  The bounded region containing the origin corresponds to a $\tau$, the remaining bounded regions correspond to $\tau+\sigma +2\rho$, and the unbounded regions correspond to $3\tau + \sigma + 3\rho$.
  
  The algebra $H^*(M_3(\mathcal{S}_3))$ decomposes as 
  \begin{align*}
    &H^0(M_3(\mathcal{S}_3)) \cong _{S_3} \tau \\
    &H^2(M_3(\mathcal{S}_3)) \cong _{S_3} \tau + \sigma + 2\rho \\
    &H^4(M_3(\mathcal{S}_3)) \cong _{S_3} 3\tau + \sigma + 4\rho 
  \end{align*}
  as $S_3$-representations.
\end{example}


\begin{thebibliography}{9}

\bibitem[At]{At} M. Atiyah. Equivariant Cohomology and Representations of the Symmetric Group. \emph{Chin. Ann. of Math.}, 22B (2001), 23Ğ30.

\bibitem[Bo]{Bo}
  A. Borel. \emph{Seminar on Transformation Groups}. Annals of Mathematical Studies 46, Princeton, 1960.
  
\bibitem[Co]{Co} 
  R. Cordovil. A commutative algebra for oriented matroids. \emph{Discrete Comput. Geom.} 27 (2002), 73Ğ84
  
\bibitem[dLS]{dLS}
  M. de Longueville and C.A. Schultz. The cohomology rings of complements of subspace arrangements. \emph{Math. Ann.} 319 (2001), no. 4, 625-646.
  
\bibitem[GR]{GR} 
  I.M. Gelfand and G.L. Rybnikov. Algebraic and topological invariants of oriented matroids. \emph{Soviet Math. Dokl.} 40 (1990), no. 1, 148Ğ152.

\bibitem[GKM]{GKM}
  M. Goresky, R. Kottwitz, R. MacPherson. Equivariant cohomology, Koszul duality, and the localization theorem. \emph{Invent. Math.} 131 (1998), no. 1, 25Ğ83.  
  
\bibitem[OS]{OS}
  P. Orlik and L. Solomon. Combinatorics and topology of complements of hyperplanes. \emph{Invent. Math.} 56 (1980), 167-189.
    
\bibitem[OT]{OT} P. Orlik and H. Terao. \emph{Arrangements of Hyperplanes}, Grundlehren Math. Wiss., Bd. 300, Springer-Verlag, Berlin-Heidelberg-New York, 1992.
  
\bibitem[Pr]{Pr}
  N. Proudfoot. The equivariant Orlik-Solomon algebra. \emph{Journal of Algebra} 305 (2006) 1186-1196.
    
\bibitem[VG]{VG}
  A.N. Varchenko and I.M. Gelfand. Heaviside functions of a configuration of hyperplanes. \emph{Functional Anal. Appl.} 21 (1987), no. 4, 255-270.


\end{thebibliography}
\end{document}